\newtheorem{theorem}{Theorem}[section]
\newtheorem{lemma}[theorem]{Lemma}
\newtheorem{proposition}[theorem]{Proposition}
\newtheorem{remark}[theorem]{Remark}
\newtheorem{definition}{Definition}[section]
\newtheorem{corollary}[theorem]{Corollary}
\newcommand{\beit}{\begin{itemize}}
\newcommand{\cent}{\begin{center}}
\newcommand{\eit}{\end{itemize}}
\newcommand{\eeq}{\end{equation}}
\newcommand{\eeqa}{\end{eqnarray*}}
\newcommand{\ent}{\end{center}}
\newcommand{\seq}{\begin{equation}}
\newcommand{\seqa}{\begin{eqnarray*}}
\newcommand{\barint}{-\kern-.175in\displaystyle\int}
\newcommand{\norm}[1]{\mbox{$\left\| #1 \right\|$}}
\def\XXint#1#2#3{{\setbox0=\hbox{$#1{#2#3}{\int}$}
    \vcenter{\hbox{$#2#3$}}\kern-.5\wd0}}
\begin{document}
\title[Weak weighted estimates]{Restricted weighted weak boundedness  for product TYPE  operators}

 \author {Mar\'{\i}a Jes\'us Carro \and   Sheldy Ombrosi}

\address{Mar\'ia J. Carro,   Department of  Analysis and Applied Mathematics,              
Universidad Complutense de Madrid, Plaza de las Ciencias 3,
28040 Madrid, Spain.}
\email{mjcarro@ucm.es}

\address{Sheldy Ombrosi,   Department of  Analysis and Applied Mathematics,              
Universidad Complutense de Madrid, Plaza de las Ciencias 3,
28040 Madrid, Spain.}
\email{sombrosi@ucm.es}

 \subjclass[2010]{Primary: 47B90  Secondary: 42B15}

\keywords{Muckenhoupt weights, Hilbert transform, Fourier multiplier.}

\thanks{The authors were partially supported by  grants PID2020-113048GB-I00 funded by MCIN/AEI/10.13039/501100011033,  CEX2019-000904-S funded by MCIN/AEI/ 10.13039/501100011033 and Grupo UCM-970966 (Spain)}

\begin{abstract}  
Given a  bilinear (or sub-bilinear) operator $B$, we prove restricted weighted weak type inequalities of the form 
$$
||B(f_1, f_2)||_{L^{p, \infty}(w_1^{p/p_1}w_2^{p/p_2})}\lesssim ||f_1||_{L^{p_1, 1}(w_1)}||f_2||_{L^{p_2, 1}(w_2)}, 
$$
whenever $B(f_1, f_2)= (T_1f_1) (T_2 f_2)$ is the product of two singular integral operators satisfying  Dini Conditions. Additionally, we also establish, as an application, the boundedness of a certain class of bounded variation bilinear Fourier multipliers  solving a question posted in \cite{bcls}. 
 \end{abstract}

\maketitle
 
\section{Introduction}

Multilinear operators likely first appeared in the work of Coifman and Meyer  in the 1970s  \cite{cm1:cm1, cm2:cm2} when they were interested in the study of certain singular integral operators, such as the Calder\'on commutators, paraproducts, and pseudodifferential operators. Additionally, the solution of the Calder\'on's conjecture on the  boundedness of the bilinear Hilbert transform by Lacey and Thiele (\cite{lt:lt})  gave a strong motivation to Grafakos and Torres for the study of multilinear singular integrals (see \cite{gt:gt}). 

One of the simplest example of a bilinear (or sub-bilinear) operator is given by the product of two  linear (resp. sublinear) operators
$$
B(f_1, f_2)= (T_1f_1)(T_2 f_2), 
$$
whenever they are well defined. Weighted boundedness of the type 
\begin{equation}\label{easy}
||B(f_1, f_2)||_{L^p(w_1^{p/p_1}w_2^{p/p_2})}\lesssim ||f_1||_{L^{p_1}(w_1)}||f_2||_{L^{p_2}(w_2)}
\end{equation}
under the condition 
\begin{equation}\label{stad1}
\frac 1p=\frac 1{p_1}+\frac 1{p_2}, \qquad p_j\ge 1, 
\end{equation}
are trivial by H\"older's inequality assuming $T_j$ are bounded on $L^{p_j}(w_j)$. These inequalities have been widely studied mainly in the setting of Muckenhoupt weights ( see \cite{lw:lw, ft:ft, ft3:ft3,ft2:ft2,loptt:loptt, mn:mn}). Condition \eqref{stad1} is assumed all over the paper and we simply write $(p_1, p_2; p)$. 
 
Things change completely if we are interested in the so-called restricted weighted weak type inequalities
\begin{equation}\label{rwt}
||B(f_1, f_2)||_{L^{p, \infty}(w_1^{p/p_1}w_2^{p/p_2})}\lesssim ||f_1||_{L^{p_1, 1}(w_1)}||f_2||_{L^{p_2, 1}(w_2)}
\end{equation}
under the weaker condition that $T_j:L^{p_j,1}(w_j) \to L^{p_j, \infty}(w_j)$, $j=1,2$, whenever $w_1\neq w_2$. The main difficulty, to prove \eqref{rwt}, is the fact that weak Lorentz spaces do not satisfy H\"older's inequality; that is, 
$$
||f g||_{L^{p, \infty}(w_1^{p/p_1} w_2^{p/p_2})} \lesssim ||f||_{L^{p_1, \infty}(w_1)} ||g||_{L^{p_2, \infty}(w_2) }
$$
is false in general \cite{cr:cr}. Let us just mention that if $w_1=w_2=w$, then H\"older's inequality works perfectly and the restricted weak type inequality holds trivially.

In particular, if $T_1=T_2= M$ is the Hardy-Littlewood maximal operator defined for locally integrable functions on $\mathbb R^n$ by
$$
Mf(x):=\sup_{Q\ni x}\frac{1}{|Q|}\int_Q |f(y)|dy,
$$
where the supremum is taken over all cubes $Q\subseteq\mathbb R^n$ containing $x \in \mathbb R^n$, the restricted weighted weak type boundedness
of $B(f_1, f_2)=(Mf_1)(Mf_2)$ was proved in \cite{loptt:loptt} for the case $(1,1;1/2)$ and in \cite{pr:pr} for the rest of the cases.  Hence,  it is known that, for every $(p_1, p_2; p)$ with $p_1, p_2\ge 1$, and every $w_j\in A_{p_j}^{\mathcal R}$ (we refer to Section \ref{prel} for all the definitions and properties of the classes of weights that are going to appear in this introduction), 
\begin{equation}\label{max}
\|(Mf_1)(Mf_2)\|_{L^{p, \infty}(w_{1}^{p/p_1}w_{2}^{p/p_2})}\lesssim \|f_1\|_{L^{p,1}(w_1)}\|f_2\|_{L^{p,1}(w_2)}.
\end{equation}
 
 However, if $M$ is substituted, for example,  by the Hilbert transform 
$$
Hf(x)=p.v. \lim_{\varepsilon\to 0} \int_{|x-t|>\varepsilon} \frac{f(t)}{x-t} dt,
$$
that is, $B(f_1, f_2)=H_2(f_1, f_2):=(Hf_1) (Hf_2)$, the question was left open in \cite{bcls} and \cite{pr:pr}. One of the main results in this paper is to prove \eqref{rwt} not only for $H_2$ but for a much bigger class of operators including  the product of  Calder\'on-Zygmund operators satisfying a Dini condition assuming that $w_j\in A_{p_j}^\mathcal R$ $j=1,2$.   In fact, the class of operators,  we are going to work with,  will be those satisfying the called Condition (C) (see Definition \ref{condC}).  Let us just mention that sub-bilinear operator can be changed, with no extra effort, by sub-multilinear operator.

Moreover, a quantitative version of our results will provide, through Rubio de Francia’s extrapolation theory, an interesting application to a certain class of bounded variation bilinear Fourier multipliers. We would like to emphasize that more important than the specific example we are about to study is the technique we develop using restricted weighted weak-type inequalities, which enables us to solve an open problem discussed below.

Recall that given a bounded function $m(\xi, \eta)$ on $\mathbb R^n\times \mathbb R^n$,   a bilinear Fourier multiplier  is defined, initially  on pairs of Schwartz functions $f$ and $g$,  by the operator
$$
B_m (f, g)(x) := \int_{\mathbb R^{2n}} m(\xi, \eta) \hat f(\xi) \hat g(\eta) e^{i x\cdot(\xi+\eta)} d\xi d\eta, \qquad x \in \mathbb R^n.
$$
 
The theory of bilinear Fourier multipliers has been  strongly developed in the last years and  there are many results concerning weighted inequalities  of the form \eqref{easy}  (see \cite{gra:gra, gt:gt, ft:ft, ft3:ft3,ft2:ft2,loptt:loptt, mn:mn}).

\smallskip

Clearly, if $m(\xi, \mu)=m_1(\xi)m_2(\mu)$, then
$$
B_m(f_1, f_2)= (T_{m_1}f_1)(T_{m_2}f_2),
$$
with $T_{m_j}$ the standard linear Fourier multiplier; that is,
$$
T_{m_j} f_j(x)=\int_{\mathbb R^n} \Hat{f}_j(\xi) m_{j}(\xi) e^{2\pi x\cdot \xi}d\xi.
$$
In particular, if
$m(\xi, \mu)= \mbox{sgn} \,\xi \ \mbox{sgn}  \, \mu$, we have that
$
B_m =H_2. 
$

Let us now consider the following natural extension of  bounded variation functions on $\mathbb R$. Set
 \begin{equation}\label{motivation}
m(\xi, \eta) := \int_{-\infty}^{\xi}\int_{-\infty}^{\eta} d\mu(t, s),  \qquad \xi, \eta \in \mathbb R,
\end{equation}
where $d\mu$ is a finite measure on $\mathbb R^2$ that we can assume, without loss of generality, that $\mu(\mathbb R^2)=1$. This type of bilinear Fourier multiplier were introduced in \cite{r:r} and some weighted estimates were also given  in \cite{bcls}.

In fact, it is easy to see that, for nice functions $f$ and $g$, 
\begin{equation*}
B_m (f, g)(x)= \int_{\mathbb R^2} H_{t,s} (f, g)(x)  \, d\mu(t, s), \qquad  x \in \mathbb R,
\end{equation*}
where
\begin{equation*}\label{eq:Hst_operator}
    H_{t,s} (f, g)(x)= H_tf(x) H_s g(x), 
\end{equation*}
and
 $$
  H_rh(x) =\frac 12\left[ h(x) + i e^{2\pi i xt}H(e^{-2\pi i r\cdot} h)(x)\right] . 
    $$
 One of the original motivations of this paper was to prove an estimate that remained open in \cite{bcls}; namely
\begin{equation}\label{bmbm}
    B_m: L^1(u_1)\times L^1(u_2) \longrightarrow L^{1/2}(u_1^{1/2}u_2^{1/2}), 
 \end{equation}
    for every $u_1, u_2\in A_1$. The case $u_1=u_2$ was already solved there. To prove \eqref{bmbm},  using a Rubio de Francia extrapolation technique developed in \cite{r:r, cr1:cr1}, it will be enough to prove that, for some $(p_1, p_2; p)$ with $p>1$ and every $w_j\in \hat A_{p_j, 2}$, $j=1,2$, 
\begin{equation*}\label{porfin}
B_m:L^{p_1, 1}(w_1) \times L^{p_2, 1}(w_2) \longrightarrow L^{p, \infty}(w_1^{p/p_1} w_2^{p/p_2}), 
\end{equation*}
with a constant controlled by $\varphi([w_1]_{ \hat A_{p_1, 2}}, [w_2]_{ \hat A_{p_2, 2}})$ for some function $\varphi$ increasing in each variable.

The strategy, in both \cite{loptt:loptt}  and \cite{pr:pr} to obtain \eqref{max},   was to reduce the problem to a linear Sawyer mixed weak  estimate. Actually, the contribution here will be to show that a variant of that strategy can be applied to several operators beyond the Hardy-Littlewood maximal function and in particular to the Hilbert transform. As a consequence we shall obtain the following theorem. 

\begin{theorem} \label{Hilbert} For every $(p_1, p_2; p)$  and every  $w_j\in A_{p_j}^\mathcal R$ ($j=1,2$), 
$$
H_2:L^{p_1, 1}(w_1) \times L^{p_2, 1}(w_2) \longrightarrow L^{p, \infty}(w_1^{p/p_1} w_2^{p/p_2}).
$$
\end{theorem}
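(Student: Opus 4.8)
The plan is to prove the result by reducing the bilinear restricted weak-type estimate to a linear, one-weight mixed Sawyer-type inequality, in the spirit of \cite{loptt:loptt} and \cite{pr:pr} but adapted to the Hilbert transform (and more generally to operators satisfying Condition (C)). By homogeneity and density it suffices to test the estimate on $f_j=\chi_{E_j}$ with $E_j$ of finite measure; then $\|f_j\|_{L^{p_j,1}(w_j)}\simeq w_j(E_j)^{1/p_j}$, so we must bound
$$
\Big(w_1^{p/p_1}w_2^{p/p_2}\Big)\Big(\big\{x:|Hf_1(x)|\,|Hf_2(x)|>\lambda\big\}\Big)\lesssim \lambda^{-p}\,w_1(E_1)\,w_2(E_2).
$$
First I would split the level set according to which of the two factors is large, say $|Hf_1(x)|>\sqrt{\lambda}\,\Phi(x)$ for a suitable auxiliary function $\Phi$ built from $Hf_2$ and the weights; the symmetric term is handled the same way after exchanging the roles of $1$ and $2$.

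The core step is the following: on the set where $|Hf_2|$ is controlled, one wants a pointwise (or Chebyshev-type) estimate replacing the missing Hölder inequality in weak Lorentz spaces. Here is where Condition (C) enters: it should give a good-$\lambda$ / Coifman–Fefferman comparison of $H$ with the maximal operator $M$ (or $M_s$ for some $s>1$) against $A_\infty$ weights, so that the relevant weighted measure of $\{|Hf_2|>t\}$ is comparable to that of $\{Mf_2>ct\}$, up to the $A_\infty$ constant of the weight $w_1^{p/p_1}w_2^{p/p_2}$ — and crucially $w_1^{p/p_1}w_2^{p/p_2}\in A_\infty$ because $w_j\in A_{p_j}^{\mathcal R}\subset A_{p_j}\subset A_\infty$ and products of $A_\infty$ weights with exponents summing to $1$ stay in $A_\infty$. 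Using \eqref{max} for the maximal operator (already known for $A_{p_j}^{\mathcal R}$ weights) to absorb the $Hf_2$-factor, the remaining term becomes a \emph{linear} weighted weak-type estimate for $Hf_1=H\chi_{E_1}$ of the form
$$
v\big(\{x: |H\chi_{E_1}(x)|>t\}\big)\lesssim \frac{1}{t}\,w_1(E_1),
$$
with $v$ a weight of mixed $A_1\cdot A_p$ type — precisely the setting of Sawyer's conjecture / the known mixed weak-type results for Calderón–Zygmund operators. One then invokes (or reproves, via the Calderón–Zygmund decomposition adapted to the weight $w_1$) this mixed weak-type bound, tracking that the implicit constant depends only on $[w_1]_{A_{p_1}^{\mathcal R}}$, $[w_2]_{A_{p_2}^{\mathcal R}}$ in an increasing way (this quantitative control is what later feeds the extrapolation for \eqref{bmbm}).

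The main obstacle I expect is exactly the decoupling of the two factors in a weak space: one cannot simply apply Hölder, so the auxiliary function $\Phi$ and the choice of exponent $s>1$ in the maximal/Coifman–Fefferman step must be calibrated so that (i) $w_2^{p/p_2}$-integrability of $(Mf_2)^s$-type quantities is available from $w_2\in A_{p_2}^{\mathcal R}$, and (ii) the leftover linear operator acting on $f_1$ sees a weight still in the right mixed class. A secondary technical point is that $H\chi_{E_1}$ is only in weak-$L^1$ locally, not in $L^1$, so the Calderón–Zygmund decomposition and the good-$\lambda$ inequality have to be run at the level of the distribution function rather than with $L^1$ norms; handling the "bad part" of the decomposition and the tails of the kernel is where the Dini (Condition (C)) hypothesis is used in an essential way.
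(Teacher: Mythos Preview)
Your high-level strategy --- reduce the bilinear weak estimate to a linear Sawyer-type mixed inequality and pass from $H$ to $M$ via Condition~(C) --- is indeed the paper's route (Theorem~\ref{Hilbert} is deduced in one line from Theorem~\ref{General}). But two concrete ingredients that make the argument go through are missing from your plan.

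First, the paper does \emph{not} split the superlevel set as $\{|Hf_1|>\sqrt{\lambda}\,\Phi\}\cup\{\text{symmetric}\}$; it restricts to the annulus $\{\lambda< S_1f_1\cdot S_2f_2\le 2\lambda\}$, where $S_jf_j:=M_{1/2}(Hf_j)=(M(|Hf_j|^{1/2}))^2$. On this annulus one has the two-sided comparison $S_1f_1\sim \lambda/S_2f_2$, so a direct H\"older-type computation gives
\[
A\;\lesssim\;\Big\|\tfrac{S_1f_1}{v_2}\Big\|_{L^{p_1,\infty}(w_1v_2^{p_1})}^{\,p}\,\Big\|\tfrac{S_2f_2}{v_1}\Big\|_{L^{p_2,\infty}(w_2v_1^{p_2})}^{\,p},
\qquad v_j:=(S_jf_j)^{-1}.
\]
This is what replaces the failed H\"older inequality in weak Lorentz spaces, and it is not clear how your vaguer choice of $\Phi$ would reproduce it.

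Second, and this is the key idea you are missing: the reason one works with $S_jf_j$ rather than $|Hf_j|$ is that $v_j=(M(|Hf_j|^{1/2}))^{-2}$ is a \emph{negative power of a maximal function}, hence $v_j\in RH_\infty\subset A_\infty$ and $v_j^\alpha\in A_\infty$ with \emph{universal} constants. This is exactly what allows Theorem~\ref{extra} and then Theorem~\ref{Saw} to fire and give
\[
\Big\|\tfrac{S_if_i}{v_j}\Big\|_{L^{p_i,\infty}(w_iv_j^{p_i})}
\lesssim
\Big\|\tfrac{Mf_i}{v_j}\Big\|_{L^{p_i,\infty}(w_iv_j^{p_i})}
\lesssim \|f_i\|_{L^{p_i,1}(w_i)}.
\]
If instead the auxiliary weight were built from $|Hf_j|^{-1}$, there is no reason for it to lie in any $A_\infty$ class, and your good-$\lambda$ / Coifman--Fefferman step would have no weight to act against. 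Two smaller corrections: the inclusion is $A_{p_j}\subsetneq A_{p_j}^{\mathcal R}$, not the reverse; and the paper never uses that $w_1^{p/p_1}w_2^{p/p_2}\in A_\infty$ (your justification ``products of $A_\infty$ weights with exponents summing to $1$ stay in $A_\infty$'' is false in general) --- the argument stays entirely with the pairs $(w_i,v_j)$.
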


From here,  we shall be able to solve the open problem mentioned above.

\begin{corollary} \label{bv1} For every $m$ as in \eqref{motivation},  every $(p_1, p_2;p)$, every $q_1, q_2\ge 1$ so that $\frac 1{q_1} +\frac 1{q_2}<1$, and every $w_j\in A_{p_j}^\mathcal R$ ($j=1,2$), 
\begin{equation*}\label{porfin}
B_m:L^{p_1, \frac{p_1}{q_1}}(w_1) \times L^{p_2, \frac{p_2}{q_2}}(w_2) \longrightarrow L^{p, \infty}(w_1^{p/p_1} w_2^{p/p_2}). 
\end{equation*}
\end{corollary}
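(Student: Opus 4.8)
The plan is to reduce the statement to Theorem \ref{Hilbert} by expressing $B_m$ as an average of the elementary product operators $H_{t,s}$ against the finite measure $d\mu$, and then to invoke a quantitative (constant-tracking) version of Theorem \ref{Hilbert} applied to the operators $H_r$ in place of $H$. First I would record the pointwise identity
$$
B_m(f,g)(x)=\int_{\mathbb R^2}H_tf(x)\,H_sg(x)\,d\mu(t,s),
$$
valid for Schwartz $f,g$, and observe that since $\mu(\mathbb R^2)=1$ we have the pointwise bound $|B_m(f,g)(x)|\le \int_{\mathbb R^2}|H_tf(x)|\,|H_sg(x)|\,d|\mu|(t,s)$. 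Each $H_r$ is, up to the harmless bounded multiplication operators $h\mapsto\tfrac12 h$ and the unimodular modulations $h\mapsto e^{\pm 2\pi i r\cdot}h$ (which are isometries on every $L^{q,s}(w)$ and do not affect membership in $A_q^{\mathcal R}$), a linear combination of the identity and the Hilbert transform; hence $H_r:L^{q,1}(w)\to L^{q,\infty}(w)$ for every $w\in A_q^{\mathcal R}$ with a bound \emph{independent of $r$}. Consequently, for each fixed $(t,s)$, the product $H_{t,s}$ satisfies exactly the restricted weighted weak-type estimate of Theorem \ref{Hilbert} (more precisely, of the quantitative statement underlying it, since the proof passes through Condition (C), which is stable under composition with modulations) with a constant uniform in $(t,s)$.

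Next I would handle the passage from the pointwise average to the norm estimate. This is where the Lorentz-space improvement in the hypothesis — replacing $L^{p_j,1}$ by the worse space $L^{p_j,p_j/q_j}$ with $\tfrac1{q_1}+\tfrac1{q_2}<1$ — enters, and it is the main obstacle: one cannot simply pull $\|\cdot\|_{L^{p,\infty}}$ inside the integral against $d\mu$, because $L^{p,\infty}$ is only a quasi-norm and a Minkowski-type integral inequality loses a constant that, naively, blows up. The standard device is to work with the \emph{Lorentz space with second index chosen so the triangle inequality (or a suitable substitute) holds}: one estimates $\|B_m(f,g)\|_{L^{p,\infty}(W)}$ with $W=w_1^{p/p_1}w_2^{p/p_2}$ by first observing that, since $d\mu$ is finite, a linearization/duality argument against $W$-measurable sets of finite measure reduces matters to controlling $\int_E |B_m(f,g)|\,W\,dx$, which by Fubini is at most $\int_{\mathbb R^2}\big(\int_E |H_{t,s}(f,g)|\,W\,dx\big)\,d|\mu|(t,s)$; the inner integral is estimated by the restricted weak-type bound for $H_{t,s}$ together with the elementary fact that a weak-$L^p$ function integrated over a set of finite $W$-measure gains the missing power. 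Summing (integrating) the uniform bounds against the finite measure $|\mu|$ then yields the claim, with the exponent $p_j/q_j$ appearing precisely because the slack $1-\tfrac1{q_1}-\tfrac1{q_2}>0$ is what is consumed in the $L^{p,\infty}\to L^1_{\mathrm{loc}}(W)$ step. (Alternatively, and perhaps more cleanly, one uses the known real-interpolation identity expressing $L^{p,\infty}$ estimates from a family of $L^{p_j,1}$ estimates with uniform constants as a single $L^{p_j,p_j/q_j}$ estimate — this is exactly the mechanism used for analogous averages of Hilbert transforms elsewhere.)

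Finally I would assemble the pieces: fix any admissible $(p_1,p_2;p)$ and $w_j\in A_{p_j}^{\mathcal R}$; apply the uniform-in-$(t,s)$ restricted weighted weak-type bound for $H_{t,s}$ from the quantitative form of Theorem \ref{Hilbert}; integrate against $d|\mu|$ using the finiteness $|\mu|(\mathbb R^2)\le C$ and the Lorentz-exponent bookkeeping above; and conclude
$$
\|B_m(f,g)\|_{L^{p,\infty}(w_1^{p/p_1}w_2^{p/p_2})}\lesssim \|f\|_{L^{p_1,p_1/q_1}(w_1)}\,\|g\|_{L^{p_2,p_2/q_2}(w_2)}.
$$
The only genuinely delicate point is verifying that the constant in Theorem \ref{Hilbert} depends on the weights only through $[w_j]_{A_{p_j}^{\mathcal R}}$ (so that it is unaffected by the modulations defining $H_r$) and that the integration against $d\mu$ is legitimate on the dense class of Schwartz functions before extending by density; both are routine given the structure of Condition (C), but they are what make the averaging argument rigorous rather than merely formal.
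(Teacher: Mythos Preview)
Your reduction to the averaged product operators $H_{t,s}$ and the observation that each satisfies the restricted weighted weak-type bound uniformly in $(t,s)$ are both correct and match the paper. However, the mechanism you propose for passing from the uniform bound on $H_{t,s}$ to the stated bound on $B_m$ has a genuine gap, and you are missing the key step.

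The improvement from domain spaces $L^{p_j,1}(w_j)$ to $L^{p_j,p_j/q_j}(w_j)$ does \emph{not} come from any duality, linearization, or ``slack-consuming'' bookkeeping as you suggest. Your proposed argument---testing $\int_E|B_m(f,g)|\,W$ and using a Kolmogorov-type inequality---requires $p>1$ to make sense (for $p\le 1$ a weak-$L^p$ function need not be locally integrable against $W$), whereas the corollary must cover all $(p_1,p_2;p)$ down to $p=\tfrac12$. Moreover, even when $p>1$, nothing in that computation produces the Lorentz second exponents $p_j/q_j$; you would simply recover the $L^{p_j,1}$ bound you started from. The vague ``real-interpolation identity'' alternative does not correspond to any standard statement that yields this improvement either.

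The paper's actual argument runs in the opposite order. One first fixes exponents $(q_1,q_2;q)$ with $q>1$, so that $L^{q,\infty}(v_1^{q/q_1}v_2^{q/q_2})$ is (equivalent to) a \emph{Banach} space and Minkowski's integral inequality applies directly to the average $\int H_{t,s}\,d\mu$. This gives
\[
B_m:L^{q_1,1}(v_1)\times L^{q_2,1}(v_2)\longrightarrow L^{q,\infty}(v_1^{q/q_1}v_2^{q/q_2})
\]
for every $v_j\in\widehat A_{q_j,2}$, with constant controlled by an increasing function of $[v_j]_{\widehat A_{q_j,2}}$ (this quantitative control, coming from Theorem~\ref{General}, is essential---tracking merely $[w_j]_{A_{p_j}^{\mathcal R}}$ as you suggest would not suffice). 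One then invokes the Rubio de Francia restricted extrapolation theorem (Theorem~\ref{th:restrictedGrafakos}): it is precisely this extrapolation that converts the single $L^{q_j,1}$-scale estimate into the full family of $L^{p_j,p_j/q_j}(w_j)$ estimates claimed in the corollary. The condition $\tfrac1{q_1}+\tfrac1{q_2}<1$ is exactly the condition $q>1$ needed for the Banach-space step, not a quantity to be ``consumed'' later.
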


As a consequence,   \eqref{bmbm} holds: 

\begin{corollary} \label{bv2} For every $m$ as in \eqref{motivation},    and every $w_j\in   A_{1}$ ($j=0,1$), 
\begin{equation*}\label{porfin}
B_m:L^{1}(w_1) \times L^{1}(w_2) \longrightarrow L^{1/2, \infty}(w_1^{1/2} w_2^{1/2}). 
\end{equation*}
\end{corollary}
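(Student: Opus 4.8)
The assertion is precisely the endpoint case $(1,1;1/2)$ with $A_1$ weights, i.e.\ \eqref{bmbm}; note that Corollary~\ref{bv1} cannot be invoked directly, since taking $p_1=p_2=1$ forces $q_1=q_2=1$ and then $\frac1{q_1}+\frac1{q_2}=2\not<1$. Instead the plan is to produce, at a \emph{single} tuple $(p_1,p_2;p)$ with $p>1$, a quantitative form of the estimate of Corollary~\ref{bv1}, and then run the restricted weak type Rubio de Francia extrapolation of \cite{r:r,cr1:cr1}, exactly as outlined after \eqref{motivation} in the Introduction.

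First I would fix $p_1=p_2=4$ and $q_1=q_2=4$, so that $\frac1{q_1}+\frac1{q_2}=\frac12<1$, the input Lorentz spaces in Corollary~\ref{bv1} become $L^{4,1}(w_j)$, and $\frac1p=\frac12$, i.e.\ $p=2>1$. Corollary~\ref{bv1} then gives
$$
B_m:L^{4,1}(w_1)\times L^{4,1}(w_2)\longrightarrow L^{2,\infty}\!\left(w_1^{1/2}w_2^{1/2}\right),\qquad w_j\in A_{4}^{\mathcal R}.
$$
The first real task is to check that its proof is \emph{quantitative}. That proof runs through Theorem~\ref{Hilbert} applied to each $H_{t,s}$ — equivalently, through the Condition~(C) machinery applied to the product of the identity and modulated Hilbert transforms that build $H_{t,s}$, uniformly in $(t,s)$ since modulations are isometries commuting with the weights — followed by averaging over the probability measure $\mu$. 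Tracing the constants through the reduction to the linear Sawyer-type mixed weak inequality for the Hilbert transform and through the Dini-condition bounds shows that the implicit constant is dominated by $\varphi\!\left([w_1]_{A_{4}^{\mathcal R}},[w_2]_{A_{4}^{\mathcal R}}\right)$ for a function $\varphi$ increasing in each variable; the averaging in $d\mu$ costs only an absolute constant, via Minkowski's integral inequality in $L^{2,\infty}$, which is licit because $p=2>1$.

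Second, invoking the quantitative inclusion $\hat A_{4,2}\subset A_4^{\mathcal R}$ from Section~\ref{prel}, the displayed estimate holds with constant $\varphi\!\left([w_1]_{\hat A_{4,2}},[w_2]_{\hat A_{4,2}}\right)$ for all $w_j\in\hat A_{4,2}$. This is exactly the hypothesis required by the bilinear restricted weak type extrapolation theorem of \cite{r:r,cr1:cr1}; applying it with the asymmetric target weight $w_1^{p/p_1}w_2^{p/p_2}$ transfers the estimate from the tuple $(4,4;2)$ with $\hat A_{4,2}$ weights to the tuple $(1,1;1/2)$ with $A_1$ weights, giving
$$
B_m:L^1(w_1)\times L^1(w_2)\longrightarrow L^{1/2,\infty}\!\left(w_1^{1/2}w_2^{1/2}\right),\qquad w_j\in A_1,
$$
which is the claim; the diagonal case $u_1=u_2$ was already known from \cite{bcls}.

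I expect the main obstacle to be the quantitative bookkeeping in the first step: one must verify that \emph{every} constant entering the proof of Theorem~\ref{Hilbert} — in the mixed weak Sawyer estimate, in the passage through Condition~(C), and in the treatment of $\mu$ — depends on the weights only through an increasing function of their characteristics, since it is precisely this monotone dependence that drives the extrapolation. A lesser point is to make sure the off-diagonal, Lorentz-domain form of the extrapolation theorem in \cite{r:r,cr1:cr1} is stated in enough generality to cover the bilinear-product situation with the weight $w_1^{p/p_1}w_2^{p/p_2}$ on the target side.
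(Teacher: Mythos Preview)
Your overall plan---prove a quantitative $\hat A_{p_j,2}$ estimate at a single tuple with $p>1$, then extrapolate down to $(1,1;1/2)$---is exactly the route the paper takes, and the first step you sketch is essentially the proof of Corollary~\ref{bv1} specialized to $(4,4;2)$, with the quantitative tracking supplied by Theorem~\ref{General}.

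There is, however, a genuine gap in the second step. Applying the extrapolation Theorem~\ref{th:restrictedGrafakos} with $(p_1,p_2)=(4,4)$ and $(q_1,q_2)=(1,1)$ does \emph{not} yield
$$
B_m:L^{1}(w_1)\times L^{1}(w_2)\longrightarrow L^{1/2,\infty}(w_1^{1/2}w_2^{1/2});
$$
the conclusion \eqref{nnn} only gives
$$
B_m:L^{1,1/4}(w_1)\times L^{1,1/4}(w_2)\longrightarrow L^{1/2,\infty}(w_1^{1/2}w_2^{1/2}),
$$
and $L^{1,1/4}(w_j)\subsetneq L^1(w_j)$. This is precisely the obstruction flagged in Remark~\ref{mumu}\,(2): at $q_1=q_2=1$ the restricted (characteristic-function) estimate is \emph{not} in general equivalent to the full $L^1\times L^1$ bound, so no purely extrapolation-theoretic argument of this type can land on $L^1\times L^1$. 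The paper closes the gap by first extracting from Corollary~\ref{bv1} and Remark~\ref{mumu}\,(1) the characteristic-set inequality
$$
\|B_m(\chi_E,\chi_F)\|_{L^{1/2,\infty}(w_1^{1/2}w_2^{1/2})}\lesssim w_1(E)\,w_2(F),
$$
and then invoking the $(\varepsilon,\delta)$-atomic approximability of $B_m$ established in \cite{bcls} (see also \cite{cgs:cgs}) to upgrade this to the full $L^1\times L^1\to L^{1/2,\infty}$ bound. Your argument is missing this last, nontrivial ingredient.
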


Now, to obtain our necessary Sawyer type inequality, we need to   prove the following result, which is an extension of the case $q=1$  obtained in \cite{CU-M-P}. 

\begin{theorem} \label{extra}
Given a family of functions \(\mathcal{F}\), suppose that for some  $0 < p_0 < \infty$  and every \(w \in A_\infty\),
\[
\int_{\mathbb{R}^n} f_1(x)^{p_0} w(x) \, dx \le \varphi([w]_{A_\infty}) \int_{\mathbb{R}^n} f_2(x)^{p_0} w(x) \, dx,
\]
for all \((f_1, f_2) \in \mathcal{F}\), such that the left-hand side is finite and   $\varphi$ is an increasing  function on $[0, \infty)$. Then, for every $q\ge 1$,  $u\in A_{\infty}$ and $v^q\in A_{\infty}$, 
$$
\| f_1 v^{-1} \|_{L^{q,\infty}(uv^q)} \le C(u, v)  \| f_2 v^{-1} \|_{L^{q,\infty}(uv^q)}, \quad \forall \, (f_1, f_2) \in \mathcal{F}. 
$$
Moreover, if $v=(Mh)^{-\alpha}$ with $h\in L^1_{loc}(\mathbb R^n)$ and $\alpha>0$ and  $u\in \widehat A_{q, 2}$,  then
\begin{equation}\label{buv}
C(u, v)\lesssim \phi([u]_{\widehat A_{q,2}}),
\end{equation}
with $\phi$ an increasing function. 
\end{theorem}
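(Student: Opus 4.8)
The plan is to prove the main ($L^{q,\infty}$) conclusion by a limiting/good-$\lambda$-free argument that reduces the weak-type statement at a fixed level to the hypothesized $L^{p_0}(w)$ inequality, applied to a cleverly chosen weight $w\in A_\infty$ depending on $u$, $v$, and the level $\lambda$. Concretely, fix $(f_1,f_2)\in\mathcal F$ and $\lambda>0$, and set $E_\lambda=\{x: f_1(x)v(x)^{-1}>\lambda\}$; we must bound $uv^q(E_\lambda)$ by $C(u,v)^q\lambda^{-q}\|f_2v^{-1}\|_{L^{q,\infty}(uv^q)}^q$. On $E_\lambda$ we have $f_1(x)^{p_0}>\lambda^{p_0}v(x)^{p_0}$, so for any nonnegative $w$,
$$
\lambda^{p_0}\int_{E_\lambda} v^{p_0} w\,dx \le \int_{E_\lambda} f_1^{p_0} w\,dx \le \varphi([w]_{A_\infty})\int_{\mathbb R^n} f_2^{p_0} w\,dx,
$$
provided the left side is finite. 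The first step is therefore to choose $w$ so that both $v^{p_0}w$ is essentially $uv^q$ on $E_\lambda$ (to recover the target measure) and $w$ has controlled $A_\infty$ constant; the natural choice, following the $q=1$ case in \cite{CU-M-P}, is $w = u\, v^{q-p_0}\cdot g$ where $g$ is built from the level set, e.g. $g=\big(M(\mathbf 1_{E_\lambda}\sigma)\big)^{?}$ for an appropriate auxiliary weight $\sigma$, so that $w$ "sees" only a neighborhood of $E_\lambda$. One then uses that products/powers of $A_\infty$ weights times a maximal function to a small power stay in $A_\infty$ with a quantitatively controlled constant (a Coifman--Rochberg-type fact), to get $[w]_{A_\infty}\le$ a quantity depending only on $[u]_{A_\infty}$, $[v^q]_{A_\infty}$ and $q$, hence on $u,v$ alone; this yields the qualitative bound with $C(u,v)=C([u]_{A_\infty},[v^q]_{A_\infty},q,p_0)$.

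Next I would handle the right-hand side: $\int f_2^{p_0}w\,dx$ must be converted into $\|f_2v^{-1}\|_{L^{q,\infty}(uv^q)}$. Writing $f_2^{p_0}w = (f_2 v^{-1})^{p_0}\,\big(v^{p_0}w\big) = (f_2v^{-1})^{p_0}\,uv^q g$ and integrating in $\lambda$ appropriately (or distributing the estimate over a stopping-time/Calder\'on--Zygmund family of cubes associated to $E_\lambda$), one recognizes the level-set structure needed to produce the Lorentz quasinorm on the right; the weak-type $L^{q,\infty}$ norm appears precisely because we only control one level at a time and then sum geometrically. This is the standard mechanism by which an $L^{p_0}(w)$-to-$L^{p_0}(w)$ bound for all $A_\infty$ weights self-improves to a restricted weak-type statement in the style of Cruz-Uribe--Martell--P\'erez; the novelty over $q=1$ is carrying the extra factor $v^q$ through, which is exactly why one needs $v^q\in A_\infty$ and not merely $v\in A_\infty$.

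For the quantitative refinement \eqref{buv}, specialize $v=(Mh)^{-\alpha}$, so $v^q=(Mh)^{-\alpha q}$ and, crucially, $v^{-1}=(Mh)^{\alpha}$: by the Coifman--Rochberg theorem $(Mh)^{\delta}\in A_1$ for $0<\delta<1$, and more generally one controls $[(Mh)^{\beta}]_{A_\infty}$ (and the relevant reverse-H\"older constants) by absolute constants independent of $h$. Feeding this into the choice of $w$ above, the only remaining weight-dependence is through $u$, and the hypothesis $u\in\widehat A_{q,2}$ is tailored so that the combination $u\cdot(Mh)^{\text{(power)}}$ that shows up as the effective $A_\infty$ weight $w$ has $A_\infty$ constant bounded by an increasing function of $[u]_{\widehat A_{q,2}}$ alone; tracking constants through the geometric sum over dyadic levels then gives $C(u,v)\lesssim\phi([u]_{\widehat A_{q,2}})$.

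The main obstacle I expect is the precise construction of the auxiliary weight $w$ and the verification that its $A_\infty$ constant is controlled purely in terms of $[u]_{A_\infty}$ and $[v^q]_{A_\infty}$ (resp. $[u]_{\widehat A_{q,2}}$ in the refined case): one must arrange that $v^{p_0}w\approx uv^q$ on $E_\lambda$ while keeping $w$ globally an $A_\infty$ weight with a dimensional/structural constant, and the interplay between the exponent $q$, the fixed $p_0$, and the scaling of $v$ has to be balanced correctly — this is the step where the $q\ge 1$ extension genuinely differs from, and is more delicate than, the known $q=1$ case, and where the extra hypothesis $v^q\in A_\infty$ is consumed.
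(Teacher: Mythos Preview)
Your outline has the right skeleton --- reduce the weak-type conclusion to the $L^{p_0}$ hypothesis applied to an auxiliary $A_\infty$ weight concentrated on the relevant set --- but the two steps you flag as ``the main obstacle'' are exactly where your proposal does not go through, and the paper's mechanism is different from what you sketch.

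First, the construction of $w$. You propose $w=u\,v^{q-p_0}\,g$ with $g=\big(M(\mathbf 1_{E_\lambda}\sigma)\big)^\delta$ and appeal to Coifman--Rochberg. The problem is that a generic $A_\infty$ weight times an $A_1$ weight need not be $A_\infty$ with a controlled constant, so $u\cdot g$ has no reason to lie in $A_\infty$ uniformly. The paper instead factorizes $u=u_0v_0$ with $u_0\in A_1$, $v_0\in RH_\infty$, and uses the Rubio de Francia iteration $R_{u_0}(\chi_E)=\sum_k (2K_0)^{-k}L_{u_0}^k(\chi_E)$ built from $L_{u_0}h=M(u_0h)/u_0$ (Proposition~\ref{qo}). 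The point of this algorithm is precisely that $u_0\,R_{u_0}(\chi_E)\in A_1$ with constant $\le 2K_0$; then $w=u_0\,R_{u_0}(\chi_E)\,v_0\,v^{q-r}$ is $A_\infty$ because it is (an $A_1$ weight) $\times$ (an $RH_\infty$ weight) $\times$ (something close to $v^q\in A_\infty$, by choosing $r$ near $q$). Coifman--Rochberg alone does not couple $g$ to $u_0$, and that coupling is essential.

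Second, the right-hand side. Your plan to ``integrate in $\lambda$'' or ``sum geometrically'' is not how the argument closes: the auxiliary weight depends on the set and there is no clean summation. The paper avoids level sets entirely by starting from the Kolmogorov characterization
\[
\Big\|\tfrac{f_1}{v}\Big\|_{L^{q,\infty}(uv^q)}^r\ \approx\ \sup_E\ \frac{1}{uv^q(E)^{1-r/q}}\int_E f_1^r\,u\,v^{q-r}
\]
for a fixed $r<q$, and after applying the hypothesis (at exponent $r$, which is legitimate by standard $A_\infty$ extrapolation from $p_0$) uses the Lorentz--H\"older pairing
\[
\int \Big(\tfrac{f_2}{v}\Big)^r R_{u_0}(\chi_E)\,uv^q \ \le\ \Big\|\tfrac{f_2}{v}\Big\|_{L^{q,\infty}(uv^q)}^r\,\|R_{u_0}(\chi_E)\|_{L^{(q/r)',1}(uv^q)}\ \lesssim\ \Big\|\tfrac{f_2}{v}\Big\|_{L^{q,\infty}(uv^q)}^r\,uv^q(E)^{1-r/q},
\]
where the last bound uses that $R_{u_0}$ is bounded on $L^{(q/r)',1}(uv^q)$ once $(q/r)'\ge p_0$ (again Proposition~\ref{qo}). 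This closes in one stroke. For the quantitative part with $v=(Mh)^{-\alpha}$ and $u\in\widehat A_{q,2}$, the paper uses Proposition~\ref{later} to track $K_0$ (hence the $A_\infty$ constant of $w$) purely in terms of $[u_0]_{A_1}\sim[u]_{\widehat A_{q,2}}^q$; your proposal does not identify this dependence because it never isolates the $A_1$ factor $u_0$.
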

 
 \smallskip

Given an operator $T$,  set
$$
Sf=S_Tf:= M_{1/2}(Tf)(x)=(M(|Tf|^{1/2}(x))^{2}.
$$

 \begin{definition} \label{condC}
 We say that $T$  satisfies  condition $(C)$ if there is some  $0 < p_0 < \infty$, such that, for every $w \in A_\infty$, we have
$$
\int_{\mathbb{R}^n} Sf(x)^{p_0} w(x) \, dx \leq \varphi([w]_{A_\infty}) \int_{\mathbb{R}^n} Mf(x)^{p_0} w(x) \, dx,
$$
for every function $f$ such that the left hand side is finite and for some increasing function $\varphi$.
 \end{definition}
 
It is known that any Calder\'on-Zygmund with Dini condition meets condition $(C)$. This fact  was first observed in page 124 \cite{Al-Pe}, and for a more detailed proof see Section 3 in \cite{Or-Ca}. 

\smallskip

Now, in \cite{Li-O-P} it was proved that if $u\in A_1$ and $v\in A_{\infty}$ then  the inequality 
$$
\| Mfv^{-1} \|_{L^{1,\infty}(uv)} \leq C(u, v) \|f\|_{L^1(u)}
$$
holds and, in \cite{pr:pr},  a variant of that inequality was considered for others $q>1$; namely, for every $u\in A_q^{\mathcal R}$ ($q\ge 1$) and every $v$ such that $uv^q\in A_\infty$, 
\begin{equation}\label{mvmv}
\| Mfv^{-1} \|_{L^{q,\infty}(uv^q)} \leq C_M(u, v) \|f\|_{L^{q,1}(u)},
\end{equation}
where, if $uv^q\in A_r^{\mathcal R}$, 
\begin{equation}\label{vava}
C_M(u, v)=\Psi( [u]_{A_q^{\mathcal R}}, [uv^q]_{A_r^{\mathcal R}}),
\end{equation}
with $\Psi$ an increasing function in each variable,  depending only on $r$ and $q$. 

Therefore, that achievement combined with Theorem \ref{extra} produces the following result. Let us mention that the case $q=1$ was essentially  obtained in \cite{CU-M-P}.

\begin{theorem}\label{Saw}
Let $q\ge 1$  and  let $u$ and $v$ be weights such that   $v\in A_{\infty}$, $u\in A_q^{\mathcal R}$ and $u v^q \in A_\infty$. Then, if an operator  $T$  satisfies  condition $(C)$, we have that 

$$
\| Sf v^{-1} \|_{L^{q,\infty}(uv^q)} \leq C_S(u, v) \|f\|_{L^{q,1}(u)}. 
$$
Moreover, if $v=(Mh)^{-\alpha}$ for some $h\in L^1_{loc}$ and some $\alpha>0$ and $u\in \widehat A_{q, 2}$, then we can take 
$$
C_S(u, v)\sim \phi ([u]_{\widehat A_{q,2}}), 
$$
with $\phi$ an increasing function. 
 
\end{theorem}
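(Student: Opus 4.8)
The plan is to obtain Theorem~\ref{Saw} as a direct concatenation of two tools already available: the extrapolation statement of Theorem~\ref{extra} and the weighted weak-type bound \eqref{mvmv} for the Hardy--Littlewood maximal operator. The first observation is that condition $(C)$ is, word for word, the hypothesis of Theorem~\ref{extra} once the right family is chosen. So I would fix $p_0$ as in Definition~\ref{condC} and set
$$
\mathcal F:=\Big\{(Sf,\,Mf)\ :\ f\ \text{such that}\ \int_{\mathbb R^n}Sf(x)^{p_0}w(x)\,dx<\infty\Big\}.
$$
Definition~\ref{condC} then says precisely that $\int_{\mathbb R^n}f_1^{p_0}w\le\varphi([w]_{A_\infty})\int_{\mathbb R^n}f_2^{p_0}w$ for every $(f_1,f_2)\in\mathcal F$ with finite left-hand side and every $w\in A_\infty$, which is the assumption of Theorem~\ref{extra} with this $p_0$. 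Applying that theorem (its weight hypotheses being met under the assumptions $u\in A_q^{\mathcal R}$, $v\in A_\infty$ of Theorem~\ref{Saw}) we get, for every $q\ge 1$,
$$
\|Sf\,v^{-1}\|_{L^{q,\infty}(uv^q)}\ \le\ C(u,v)\,\|Mf\,v^{-1}\|_{L^{q,\infty}(uv^q)}
$$
for every $f$ with $\int_{\mathbb R^n}Sf^{p_0}w<\infty$ (the remaining $f$ being handled by a routine truncation of $Tf$, or being vacuous when $\|f\|_{L^{q,1}(u)}=\infty$).

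Next I would remove the maximal function from the right-hand side. Since $u\in A_q^{\mathcal R}$ and $uv^q\in A_\infty$ by hypothesis, inequality \eqref{mvmv} of \cite{pr:pr} applies and yields
$$
\|Mf\,v^{-1}\|_{L^{q,\infty}(uv^q)}\ \le\ C_M(u,v)\,\|f\|_{L^{q,1}(u)},
$$
with $C_M(u,v)$ as described in \eqref{vava}. Chaining the two displayed estimates proves the first assertion of Theorem~\ref{Saw} with $C_S(u,v)=C(u,v)\,C_M(u,v)$.

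It remains to track constants when $v=(Mh)^{-\alpha}$, $h\in L^1_{loc}$, $\alpha>0$, and $u\in\widehat A_{q,2}$. Here the ``moreover'' part of Theorem~\ref{extra} applies and gives, through \eqref{buv}, $C(u,v)\lesssim\phi_1([u]_{\widehat A_{q,2}})$ with $\phi_1$ increasing. For $C_M(u,v)$ I would use \eqref{vava}, namely $C_M(u,v)=\Psi\big([u]_{A_q^{\mathcal R}},[uv^q]_{A_r^{\mathcal R}}\big)$ for an $r$ with $uv^q\in A_r^{\mathcal R}$, together with two facts from Section~\ref{prel}: that $\widehat A_{q,2}$ embeds into $A_q^{\mathcal R}$ with $[u]_{A_q^{\mathcal R}}$ controlled by $[u]_{\widehat A_{q,2}}$, and --- the crucial point --- that $uv^q=u\,(Mh)^{-\alpha q}$ lies in $A_r^{\mathcal R}$ for a fixed $r=r(q,\alpha)$ with $[uv^q]_{A_r^{\mathcal R}}$ bounded by an increasing function of $[u]_{\widehat A_{q,2}}$ alone; this last property is exactly what the class $\widehat A_{q,2}$ is designed to capture. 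Since $\Psi$ is increasing in each variable, it follows that $C_M(u,v)\lesssim\phi_2([u]_{\widehat A_{q,2}})$, whence $C_S(u,v)=C(u,v)\,C_M(u,v)\lesssim\phi([u]_{\widehat A_{q,2}})$ for a suitable increasing $\phi$. The step I expect to be the main obstacle is precisely this last one: controlling the restricted $A_r$-constant of $u(Mh)^{-\alpha q}$ purely in terms of $[u]_{\widehat A_{q,2}}$; granting it, the theorem is just the composition of Theorem~\ref{extra} with \eqref{mvmv}.
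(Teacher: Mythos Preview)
Your proposal is correct and follows essentially the same route as the paper's proof: apply Theorem~\ref{extra} to the family $\{(Sf,Mf)\}$ to pass from $Sf$ to $Mf$, then invoke \eqref{mvmv} to pass from $Mf$ to $f$, and for the quantitative part combine the ``moreover'' clause of Theorem~\ref{extra} with \eqref{vava}. The step you flagged as the main obstacle --- bounding $[uv^q]_{A_r^{\mathcal R}}$ purely in terms of $[u]_{\widehat A_{q,2}}$ --- is exactly what the paper isolates as \eqref{fff}: one writes $u=u_0(Mh_1)^{\beta(1-q)}(Mh_2)^{(1-\beta)(1-q)}$ so that $uv^q=u_0\prod_{j=1}^3(Mh_j)^{-\alpha_j}$, and then the discussion at the end of Section~\ref{prel} gives $[uv^q]_{A_r^{\mathcal R}}\lesssim [u_0]_{A_1}^\beta\lesssim [u]_{\widehat A_{q,2}}^{q\beta}$.
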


Observe that since $|Tf(x)|\le Sf(x)$ a.e. $x$,  we also have that the same conclusion for the operator $T$. Now, using the previous result we can obtain our main result. 

\begin{theorem} \label {General}
Let $w_j\in A_{p_j}^{\mathcal R}$ ($j=0,1$), and let $T_j$ be two operators satisfying  condition $(C)$.  Then, for every $(p_1,p_2;p)$,  
$$
\|(T_1f_1)(T_2f_2)\|_{L^{p,\infty}(w_{1}^{p/p_1}w_{2}^{p/p_2})}\lesssim C(w_1, w_2) \|f_1\|_{L^{p_1, 1}(w_1)}\|f_2\|_{L^{p_2,1}(w_2)}.
$$

In addition, if  $w_j\in \widehat A_{p_j, 2}$ ($j=0,1$),  then
$$
C(w_1, w_2)\lesssim \varphi([w_1]_{\widehat A_{p_1, 2}}, [w_2]_{\widehat A_{p_2, 2}})  
$$
with $\varphi$ and increasing function in each variable. 
\end{theorem}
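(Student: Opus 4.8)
The plan is to run the reduction of \cite{loptt:loptt,pr:pr} that turns the bilinear estimate into a \emph{linear} Sawyer‑type mixed weak inequality, which is now available for general operators thanks to Theorem~\ref{Saw}. Since $|T_jf_j(x)|\le S_jf_j(x):=S_{T_j}f_j(x)$ a.e., it suffices to bound $\|(S_1f_1)(S_2f_2)\|_{L^{p,\infty}(W)}$, where $W:=w_1^{p/p_1}w_2^{p/p_2}$; observe that $\tfrac1p=\tfrac1{p_1}+\tfrac1{p_2}$ is the same as $\tfrac{p}{p_1}+\tfrac{p}{p_2}=1$, and this is essentially the only arithmetic used. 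The device is the auxiliary weight
$$
v:=\Big(\frac{w_2}{w_1}\Big)^{1/(p_1+p_2)},
$$
chosen precisely so that $W=w_1v^{p_1}=w_2v^{-p_2}$. One first checks, from the stability properties of the restricted Muckenhoupt classes recorded in Section~\ref{prel}, that $w_1\in A_{p_1}^{\mathcal R}$ and $w_2\in A_{p_2}^{\mathcal R}$ force $W\in A_\infty$ and $v,v^{-1}\in A_\infty$. With that granted, Theorem~\ref{Saw} applies in two ways: to $T_1$ with exponent $p_1$ and pair $(w_1,v)$,
$$
\|S_1f_1\,v^{-1}\|_{L^{p_1,\infty}(W)}\le C_S(w_1,v)\,\|f_1\|_{L^{p_1,1}(w_1)},
$$
and to $T_2$ with exponent $p_2$ and pair $(w_2,v^{-1})$,
$$
\|S_2f_2\,v\|_{L^{p_2,\infty}(W)}\le C_S(w_2,v^{-1})\,\|f_2\|_{L^{p_2,1}(w_2)}.
$$

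Next I would combine these via the weak‑type H\"older inequality \emph{with a common measure} — which, unlike the one for different weights quoted in the Introduction, does hold: writing $(S_1f_1)(S_2f_2)=(S_1f_1\,v^{-1})(S_2f_2\,v)$, for any $\lambda>0$ and $s>0$ set inclusion together with the two bounds above gives
$$
W\big(\{S_1f_1\cdot S_2f_2>\lambda\}\big)\le W\big(\{S_1f_1\,v^{-1}>s\}\big)+W\big(\{S_2f_2\,v\ge\lambda/s\}\big)\le \frac{A}{s^{p_1}}+\frac{s^{p_2}B}{\lambda^{p_2}},
$$
with $A=C_S(w_1,v)^{p_1}\|f_1\|_{L^{p_1,1}(w_1)}^{p_1}$ and $B=C_S(w_2,v^{-1})^{p_2}\|f_2\|_{L^{p_2,1}(w_2)}^{p_2}$; minimizing the right‑hand side in $s$ and using $p_1p_2/(p_1+p_2)=p$ collapses it to $\lambda^{p}W(\{S_1f_1\cdot S_2f_2>\lambda\})\lesssim C_S(w_1,v)^{p}C_S(w_2,v^{-1})^{p}\,\|f_1\|_{L^{p_1,1}(w_1)}^{p}\|f_2\|_{L^{p_2,1}(w_2)}^{p}$, which is exactly the claim with $C(w_1,w_2)\lesssim C_S(w_1,v)\,C_S(w_2,v^{-1})$. (One could instead first reduce to $f_j=\chi_{E_j}$ by the usual Calder\'on reduction for restricted weak type; the direct route above sidesteps it.)

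For the quantitative assertion one repeats the computation keeping track of the constants. The point of assuming $w_j\in\widehat A_{p_j,2}$ is that, by the description of this class recalled in Section~\ref{prel}, the weights $v$, $v^{-1}$ and $W=w_1v^{p_1}=w_2v^{-p_2}$ then have all the Muckenhoupt constants that enter Theorem~\ref{Saw} bounded by functions of $[w_1]_{\widehat A_{p_1,2}}$ and $[w_2]_{\widehat A_{p_2,2}}$, so the quantitative part of Theorem~\ref{Saw} yields $C_S(w_1,v),C_S(w_2,v^{-1})\lesssim\varphi_\ast([w_1]_{\widehat A_{p_1,2}},[w_2]_{\widehat A_{p_2,2}})$ for a suitable increasing $\varphi_\ast$; multiplying gives the bound with $\varphi:=\varphi_\ast^{\,2}$, which is increasing in each variable.

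I expect the genuine difficulty to lie not in this bilinear‑to‑linear mechanism — routine once Theorem~\ref{Saw} is in hand — but in the accompanying weight theory: showing that the restricted hypotheses on $w_1,w_2$ propagate to $W,v,v^{-1}\in A_\infty$, so that Theorem~\ref{Saw} can even be invoked, and, for the sharp constant, that the membership with controlled constants of $v^{\pm1}$ and $W$ in the pertinent classes is governed by $[w_1]_{\widehat A_{p_1,2}}$ and $[w_2]_{\widehat A_{p_2,2}}$. All the harmonic analysis of the operators $T_j$ has been packaged into condition~$(C)$ and thence into Theorem~\ref{Saw}, so the remaining work is about stability of the restricted and $\widehat A_{p,2}$ classes under products and quotients of weights.
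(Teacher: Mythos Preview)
Your linearization via the \emph{fixed} auxiliary weight $v=(w_2/w_1)^{1/(p_1+p_2)}$ is not the paper's route, and the difference is not cosmetic: it creates a genuine gap in the quantitative half of the theorem. The paper instead takes the \emph{function--dependent} weights $v_j:=(S_jf_j)^{-1}$; a level--set/H\"older argument reduces matters to $\bigl\|S_if_i\,v_j^{-1}\bigr\|_{L^{p_i,\infty}(w_iv_j^{p_i})}$ for $i\ne j$, and the whole reason for replacing $|T_jf_j|$ by $S_jf_j=(M|T_jf_j|^{1/2})^{2}$ is that then $v_j=(Mh_j)^{-2}$ is a \emph{negative power of a maximal function}. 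This is precisely the structural hypothesis ``$v=(Mh)^{-\alpha}$'' in the quantitative clause of Theorem~\ref{Saw}, and it is what produces $C_S\lesssim\phi([u]_{\widehat A_{q,2}})$.

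Your $v$ is not of this form. Even when $w_j\in\widehat A_{p_j,2}$, writing $w_j=u_j\prod_k(Mh_{j,k})^{-\alpha_{j,k}}$ with $u_j\in A_1$, the factor $(u_2/u_1)^{1/(p_1+p_2)}$ survives in $v$ and cannot be absorbed into a single $(Mh)^{-\alpha}$; hence the quantitative conclusion of Theorem~\ref{Saw} does not apply to your $v$ or $v^{-1}$, and your asserted control of $C_S(w_1,v),\,C_S(w_2,v^{-1})$ by $[w_j]_{\widehat A_{p_j,2}}$ is unsupported. For the qualitative statement there is also a gap: the membership $v,v^{-1}\in A_\infty$ is \emph{not} ``recorded in Section~\ref{prel}'', and more seriously Theorem~\ref{Saw} is proved through Theorem~\ref{extra}, which needs $v^{q}\in A_\infty$. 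Your $v^{p_1}=W/w_1$ can fail even to be locally integrable: with $n=1$, $p_1=10$, $p_2=1$, $w_1=|x|^{8}\in A_{10}\subset A_{10}^{\mathcal R}$, $w_2\equiv 1$, one gets $v^{10}=|x|^{-80/11}$. The paper's choice $v_j=(S_jf_j)^{-1}$ sidesteps all of these issues simultaneously, since every positive power of $(Mh)^{-1}$ lies in $RH_\infty\subset A_\infty$ with a universal constant.
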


In the previous estimates, as well as those that follow, the symbols ``$\lesssim$'' and ``$\sim$'' indicate that the inequality or equivalence holds up to universal constants or depending on fixed parameters.

From this point, the paper is organized as follows. In the next section, we provide the necessary definitions and preliminaries. In Section 3, we prove the main results. Finally,  the applications (Corollaries 1.2 and 1.3) are provided in the last section.

\section{preliminaries}\label{prel}

Let us start by defining the classes of weights that have appeared in the introduction. For every $q> 1$, the Muckenhoupt weights $A_q$ \cite{Muckenhoupt TAMS72} are defined by the condition
$$
[w]_{A_q}:=\sup_{Q}\left(\frac{1}{|Q|}\int_Q w(x)dx\right)\left(\frac{1}{|Q|}\int_Q w(x)^{1-q'}dx\right)^{q-1}<\infty,
$$
where the supremum extends over all cubes $Q\subseteq \mathbb{R}^n$ and with $q'$ satisfying $1/q'+ 1/q = 1$. This class of weights characterizes the $L^q(w)$-boundedness of the Hardy-Littlewood maximal operator $M$. 

Moreover, if $u$ is a weight such that $Mu(x)\le C u(x)$ a.e $x\in\mathbb{R}^n$, $C > 0$, we say that $u\in A_1$, and denote by $[u]_{A_1}$ the least constant $C$ satisfying such inequality. In fact, for every $q \geq 1$, 
$$
M:L^q(w)\longrightarrow L^{q,\infty}(w) \qquad \Longleftrightarrow \qquad w\in A_q.
$$

For every $q \geq 1$, the restricted $A_q$ class, $A_q^{\mathcal{R}}$, is defined as the weights $w$ such that
$$
[w]_{A_q^{\mathcal{R}}}=\sup_{E\subseteq Q}\frac{|E|}{|Q|}\left(\frac{w(Q)}{w(E)}\right)^{1/q}<\infty,
$$
where the supremum is taken over all cubes $Q\subseteq\mathbb{R}^n$ and all measurable sets $E\subseteq Q$. It holds (see \cite{KT, chk:chk}) that
 \begin{equation*}\label{mhlrwt}
M:L^{q,1}(w) \longrightarrow L^{q,\infty}(w)\quad \iff \quad w \in A_q^{\mathcal{R}}.
\end{equation*}
 It is easy to see that, $A_q \subsetneq A_q^\mathcal R\subset A_{q+\varepsilon}$, for every $\varepsilon>0$ and, when $q = 1$, $A_1^{\mathcal{R}}=A_1$. 
Also we will consider the class of weights  $A_\infty$ given by
$$
A_\infty=\cup_{q>1}A_q, 
$$
and, it is easy to see that 
$$
A_\infty= \cup_{q>1} A_{q}^{\mathcal R}.
$$

 Moreover,  there are, in the literature,  many equivalent ways of defining $[w]_{A_\infty}$,  but the most typical one is the following Fujii-Wilson constant
$$
[w]_{A_\infty}= \sup_Q \frac 1{w(Q)}\int_Q M(w \chi_Q) dx. 
$$
It is also well known that an $A_{\infty}$ weight $w$ could be factorized as the product $w=uv$ with $u\in A_{1}$ and $v\in RH_{\infty}$,   
where $RH_\infty$ is the class of weights satisfying
$$
[v]_{RH_\infty}= \frac{(\sup_Q v(x))|Q|}   {v(Q)}<\infty. 
$$
See for instance Theorem 5.1 in \cite{CU-N}. It will be useful for us to consider the following constant associated with a weight $w\in A_{\infty}$, which we define as follows
$$
[w]_{RH_\infty^1}=\inf \{[u]_{A_1}[v]_{RH_\infty}: w=uv\}.
$$
 Also, the class $\widehat A_q$ is defined by
  $$
\widehat A_q=\{ w: \exists u\in A_1 \mbox{ and } h\in L^1_{loc}: w=u (Mh)^{1-q}\},
$$
with
$$
[w]_{\widehat A_q}=\inf_{u} [u]_{A_1}^{1/q}.
$$
In \cite{cgs:cgs},  it was proved that
$$
\widehat A_q\subset A_q^\mathcal R, \qquad [w]_{ A_q^{\mathcal R}}\lesssim [w]_{\widehat A_q}.
$$
For  our purposes, we also need to introduce the class $\widehat A_{q, 2}$ as the following set of weights: 
$$
\{w: \exists \, 0\le \alpha \le 1, u\in A_1, h_j\in L^1_{loc}: w=u (Mh_1)^{\alpha(1-q)}(Mh_2)^{(1-\alpha)(1-q)} \},
$$
with
$$
[w]_{\widehat A_{q,2}}=\inf_{u} [u]_{A_1}^{1/q},
$$
and can be easily proved by interpolation \cite{r:r} that  
\begin{equation*}\label{tec}
\widehat A_q\subset \widehat A_{q, 2}\subset A_q^\mathcal R, \qquad [w]_{ A_q^{\mathcal R}}\lesssim [w]_{\widehat A_{q,2}}\le  [w]_{\widehat A_{q}}.
\end{equation*}
In fact, since $v=(Mh_1)^{\alpha(1-q)}(Mh_2)^{(1-\alpha)(1-q)}\in RH_\infty$ with $[v]_{RH_\infty}\le C_{\alpha, q}$, we have that 
$$
\widehat A_{q, 2}  \subset  A_{\infty}, \qquad [w]_{RH_\infty^1} \lesssim [w]_{\widehat A_{q, 2}}^q.
$$
In general,   $w=u \prod_{j=1}^N (Mh_j)^{-\alpha_j}$,  with $u\in A_1$, $h_j\in L^1_{loc}$ and $\alpha_j\ge 0$, for every $j$ will be in $A_{\infty}$, and  $[w]_{RH_\infty^1}\lesssim [u]_{A_1}$.

All the above  classes of weights are strongly linked with the extrapolation theory of Rubio de Francia. In particular, we shall use the following result (see Theorem 4.2.6 in \cite{r:r}).

\begin{theorem}\label{th:restrictedGrafakos} 
Let $B$ be a bilinear operator.  Set $(p_1, p_2; p)$ with $p_j>1$ and assume that,  for every $v_j\in \widehat A_{p_j, 2}$,  
 $$
 B:L^{p_1,1}(w_1)\times L^{p_2,1}(w_2)\longrightarrow L^{p,\infty}(w_1^{p/p_1}w_2^{p/p_2})
 $$
 with constant less than or equal to 
   $\varphi \big ( [w_1]_{\widehat A_{p_1, 2}}, [w_2]_{\widehat A_{p_2, 2}} \big)$, where $\varphi$ is an increasing function in each coordinate. 
Then, for all indices $q_1,q_2 \geq 1$ such that  $q_1/p_1=q_2/p_2 \leq 1$, and for every weight $v_j\in \widehat{A}_{q_j}$,
\begin{equation}\label{nnn}
B:L^{q_1,\frac{q_1}{p_1}}(v_1)\times L^{q_2,\frac{q_2}{p_2}}(v_2) \longrightarrow L^{q,\infty}(v_1^{q/q_1}v_2^{q/q_2}),
\end{equation}
with constant less than or equal to $\phi \big([v_1]_{\hat A_{q_1}},  [v_2]_{\hat A_{q_2}}\big)$ where $\phi$ is an increasing function in each coordinate. 
\end{theorem}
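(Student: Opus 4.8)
The plan is to treat this as an off-diagonal, two-variable Rubio de Francia extrapolation for restricted weak type, carried out in three stages: first check that the target weight classes sit inside the ones appearing in the hypothesis, then run the Rubio de Francia algorithm to lower the exponent triple from $(p_1,p_2;p)$ to $(q_1,q_2;q)$ (keeping the input second index equal to $1$), and finally pass from second Lorentz index $1$ to second Lorentz index $q_j/p_j$ by a power substitution. Throughout, write $t:=q_1/p_1=q_2/p_2\le 1$, so that $q_j=tp_j$, $q=tp$, and — since $q/q_j=p/p_j$ — the target weight is $W:=v_1^{\,q/q_1}v_2^{\,q/q_2}=v_1^{\,p/p_1}v_2^{\,p/p_2}$.

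First I would record that, for $1\le q_j\le p_j$ with $p_j>1$, one has $\widehat A_{q_j}\subset\widehat A_{p_j,2}$ with $[v]_{\widehat A_{p_j,2}}\le[v]_{\widehat A_{q_j}}^{\,q_j/p_j}$: writing $v=u\,(Mh)^{1-q_j}\in\widehat A_{q_j}$ with $u\in A_1$, and using that the constant function $\mathbf 1\in L^1_{\mathrm{loc}}(\mathbb R^n)$ has $M\mathbf 1\equiv 1$,
\[
v=u\,(Mh)^{\alpha_j(1-p_j)}\,(M\mathbf 1)^{(1-\alpha_j)(1-p_j)},\qquad \alpha_j:=\tfrac{q_j-1}{p_j-1}\in[0,1],
\]
is a genuine $\widehat A_{p_j,2}$-representation of $v$, and $[u]_{A_1}^{1/p_j}=\big([u]_{A_1}^{1/q_j}\big)^{q_j/p_j}$ gives the constant bound after taking infima. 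Hence the hypothesis already applies to $B$ with the pair $(v_1,v_2)$, but only at exponents $(p_1,p_2;p)$ and with input norms $L^{p_j,1}$.

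The heart of the argument is then to upgrade this to the auxiliary restricted weak-type estimate
\[
\|B(g_1,g_2)\|_{L^{q,\infty}(W)}\le C\big([v_1]_{\widehat A_{q_1}},[v_2]_{\widehat A_{q_2}}\big)\,\|g_1\|_{L^{q_1,1}(v_1)}\,\|g_2\|_{L^{q_2,1}(v_2)},
\]
for all $v_j\in\widehat A_{q_j}$ and all bounded, compactly supported $g_1,g_2$, with $C$ increasing in each variable. I would prove this by the downward Rubio de Francia scheme: fix $g_1,g_2$ and $\lambda>0$, test $\|B(g_1,g_2)\|_{L^{q,\infty}(W)}$ against the level set $\Omega_\lambda=\{|B(g_1,g_2)|>\lambda\}$, and then perturb the weights — build, by Rubio de Francia iteration, weights $R_1,R_2\in A_1$ with controlled characteristics, adapted to $\Omega_\lambda$ and to $g_1,g_2$, and set $w_j:=v_j\,R_j^{\,p_j-q_j}$. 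Since $p_j-q_j=p_j(1-t)\ge0$ and, by the previous paragraph, $v_j$ leaves room in its $RH_\infty$-exponent, $w_j$ still lies in $\widehat A_{p_j,2}$ with characteristic controlled by $[v_j]_{\widehat A_{q_j}}$ and $[R_j]_{A_1}$; applying the hypothesis to $(w_1,w_2)$ and unwinding, the extra factors $R_j^{\,p_j-q_j}$ are precisely what converts the $\lambda^{-p}$ furnished by the hypothesis into the $\lambda^{-q}$ required. I expect this to be the main obstacle: the Rubio de Francia weight must simultaneously stay inside $\widehat A_{p_j,2}$ with controlled characteristic, carry enough mass where $|B(g_1,g_2)|$ is large, and be calibrated to turn $\lambda^{-p}$ into $\lambda^{-q}$; the two-weight bookkeeping together with the $\alpha$-parameter of $\widehat A_{p_j,2}$ makes this delicate, and it is where $p_j>1$ enters.

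Finally I would deduce \eqref{nnn} from the auxiliary estimate by a power (dilation) substitution. Using the Lorentz identity $\||h|^{s}\|_{L^{u,r}(\sigma)}=\|h\|_{L^{su,sr}(\sigma)}^{s}$ with $s=t$, the inequality \eqref{nnn} for nonnegative $f_j$ is equivalent, after raising to the power $t$ and substituting $g_j=|f_j|^{t}$, to $\big\||B(g_1^{1/t},g_2^{1/t})|^{t}\big\|_{L^{p,\infty}(W)}\lesssim\|g_1\|_{L^{p_1,1}(v_1)}\|g_2\|_{L^{p_2,1}(v_2)}$. But the left-hand side equals $\|B(g_1^{1/t},g_2^{1/t})\|_{L^{q,\infty}(W)}^{t}$ (since $pt=q$), which by the auxiliary estimate (applied with $g_j^{1/t}$ in place of $g_j$), the same power identity, and the embedding $L^{p_j,1}\hookrightarrow L^{p_j,1/t}$ (valid since $1/t\ge1$ and $q_j/t=p_j$), is controlled by $\prod_j\|g_j^{1/t}\|_{L^{q_j,1}(v_j)}^{t}=\prod_j\|g_j\|_{L^{p_j,1/t}(v_j)}\lesssim\prod_j\|g_j\|_{L^{p_j,1}(v_j)}$. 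Tracking constants through all three stages and using that $\varphi$ is increasing yields an increasing $\phi$; the exponents of $W$ are $\tfrac{q}{q_1}=\tfrac{p}{p_1}$ and $\tfrac{q}{q_2}=\tfrac{p}{p_2}$, exactly as in \eqref{nnn}.
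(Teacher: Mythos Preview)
The paper does not contain a proof of this statement: it is quoted from Roure's thesis (Theorem~4.2.6 in \cite{r:r}) and used as a black box in Section~\ref{prel}, so there is no in-paper argument to compare yours against.

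On the substance of your sketch: the overall three-stage plan is the right one, and Stage~1 (the inclusion $\widehat A_{q_j}\subset\widehat A_{p_j,2}$ via the dummy factor $M\mathbf 1\equiv 1$) is correct. Stage~2 is, as you say, where all the work lies, and your outline leaves the key step essentially unargued: you must show that $w_j:=v_j\,R_j^{\,p_j-q_j}$ lands in $\widehat A_{p_j,2}$ with constant controlled by $[v_j]_{\widehat A_{q_j}}$ and $[R_j]_{A_1}$. Since $R_j^{\,p_j-q_j}$ carries a \emph{positive} power of an $A_1$ weight while the $\widehat A_{p_j,2}$ template allows only \emph{negative} powers of $Mh$-factors, the only way in is to force $u_j\,R_j^{\,p_j-q_j}$ (with $v_j=u_j(Mh_j)^{1-q_j}$) into $A_1$; that requires a reverse-H\"older/self-improvement step you have not supplied, and the exponent $p_j-q_j$ can be arbitrarily large. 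Stage~3 is a detour: because $t=q_j/p_j\le 1$, the Lorentz embedding $L^{q_j,t}(v_j)\hookrightarrow L^{q_j,1}(v_j)$ already gives \eqref{nnn} immediately from your auxiliary estimate, with no power substitution needed. In fact your auxiliary estimate is \emph{stronger} than \eqref{nnn}; Remark~\ref{mumu} in the paper notes that for $q>1$ the two are equivalent, but for $q\le 1$ the passage from characteristic functions to general $L^{q_j,1}$ inputs is not automatic for a bilinear $B$, which is precisely why the second index $q_j/p_j$ appears in the stated conclusion.
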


\begin{remark} \label{mumu}  1) We observe that, for every $q>0$   condition \eqref{nnn} implies that, for every measurable sets $E$ and $F$, 
\begin{equation}\label{atomic}
||B(\chi_E, \chi_F)||_{L^{q,\infty}(v_1^{q/q_1}v_2^{q/q_2})}\lesssim v_1(E)^{1/q_1}v_2(F)^{1/q_2}. 
 \end{equation}
Therefore,  if  $q>1$,   \eqref{nnn} implies that 
$$
B:L^{q_1,1 }(v_1)\times L^{q_2,1}(v_2) \longrightarrow L^{q,\infty}(v_1^{q/q_1}v_2^{q/q_2}).
$$

\noindent
2) In general, if $q_1=q_2=1$, \eqref{atomic} is not equivalent to 
$$
B:L^{1 }(v_1)\times L^{1}(v_2) \longrightarrow L^{1/2,\infty}(v_1^{1/2}v_2^{1/2}).
$$
However, it was proved in \cite{bcls} that under certain hypotheses on $B$, namely to be  $(\varepsilon, \delta)$-atomic approximable in each variable,  these two conditions are equivalent. This is going to be the case in our application. 
\end{remark}

\begin{lemma}\label{infinito2} If  $w\in A_{\infty}$, then 
$$
[w]_{A_\infty}\lesssim [w]_{RH_\infty^1}^{2}. 
$$

\end{lemma}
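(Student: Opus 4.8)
The plan is to work directly from the Fujii--Wilson formula $[w]_{A_\infty}=\sup_Q \frac{1}{w(Q)}\int_Q M(w\chi_Q)\,dx$, estimate the right-hand side for an \emph{arbitrary} factorization $w=uv$ with $u\in A_1$ and $v\in RH_\infty$ (which exists by the factorization theorem quoted just before the lemma), and then take the infimum over all such factorizations. Fix a cube $Q$ and write $\|v\|_{L^\infty(Q)}$ for the essential supremum of $v$ over $Q$; note $v$ is locally bounded, since $\|v\|_{L^\infty(Q)}\le [v]_{RH_\infty}\frac{v(Q)}{|Q|}<\infty$ by the $RH_\infty$ condition.

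\emph{Step 1 (localize the maximal function).} For $x\in Q$ and any cube $R\ni x$, using $v\le \|v\|_{L^\infty(Q)}$ a.e.\ on $Q\supseteq R\cap Q$ and $u\ge 0$,
$$\frac{1}{|R|}\int_R w\chi_Q=\frac{1}{|R|}\int_{R\cap Q}uv\le \|v\|_{L^\infty(Q)}\,\frac{1}{|R|}\int_R u\le \|v\|_{L^\infty(Q)}\,Mu(x),$$
so $M(w\chi_Q)(x)\le \|v\|_{L^\infty(Q)}\,Mu(x)$ on $Q$. Applying $Mu\le [u]_{A_1}u$ a.e.\ and integrating over $Q$ gives $\int_Q M(w\chi_Q)\,dx\le [u]_{A_1}\,\|v\|_{L^\infty(Q)}\,u(Q)$.

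\emph{Step 2 (absorb $\|v\|_{L^\infty(Q)}u(Q)$ into $w(Q)$).} This is the real content. Reading the $A_1$ condition for a.e.\ $x\in Q$ as $\frac{u(Q)}{|Q|}=\langle u\rangle_Q\le Mu(x)\le [u]_{A_1}u(x)$ yields the reverse bound $u(x)\ge \frac{u(Q)}{[u]_{A_1}|Q|}$ a.e.\ on $Q$; integrating $v$ against it gives $w(Q)=\int_Q uv\ge \frac{u(Q)}{[u]_{A_1}|Q|}v(Q)$, i.e.\ $\frac{u(Q)v(Q)}{|Q|}\le [u]_{A_1}w(Q)$. Together with $\|v\|_{L^\infty(Q)}\le [v]_{RH_\infty}\frac{v(Q)}{|Q|}$ this gives
$$\|v\|_{L^\infty(Q)}\,u(Q)\le [v]_{RH_\infty}\,\frac{v(Q)}{|Q|}\,u(Q)\le [u]_{A_1}[v]_{RH_\infty}\,w(Q).$$

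\emph{Conclusion.} Combining the two steps, $\int_Q M(w\chi_Q)\,dx\le [u]_{A_1}^2[v]_{RH_\infty}\,w(Q)$ for every $Q$, hence $[w]_{A_\infty}\le [u]_{A_1}^2[v]_{RH_\infty}\le ([u]_{A_1}[v]_{RH_\infty})^2$ (using $[v]_{RH_\infty}\ge 1$); taking the infimum over all factorizations $w=uv$ gives $[w]_{A_\infty}\le [w]_{RH_\infty^1}^2$, even without an implicit constant. I do not anticipate a genuine obstacle. Step 1 is routine; the point that needs care is Step 2, where the $RH_\infty$ bound for $v$ alone is not enough—$v$ may be small precisely where $u$ concentrates its mass—and one must also use the lower bound on $u$ coming from $A_1$; one should also be careful that the localization in Step 1 is only valid for $x\in Q$, which is exactly what makes the $u$-versus-$v$ splitting usable.
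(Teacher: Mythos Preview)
Your proof is correct and follows essentially the same route as the paper's: bound $M(w\chi_Q)\le(\sup_Q v)\,Mu$, use $Mu\le[u]_{A_1}u$, then apply the $RH_\infty$ bound on $v$ and the $A_1$ lower bound $u\ge \frac{u(Q)}{[u]_{A_1}|Q|}$ on $Q$ to absorb $(\sup_Q v)\,u(Q)$ into $[u]_{A_1}[v]_{RH_\infty}\,w(Q)$, arriving at $[w]_{A_\infty}\le[u]_{A_1}^2[v]_{RH_\infty}\le([u]_{A_1}[v]_{RH_\infty})^2$ before taking the infimum. Your write-up is in fact slightly more explicit than the paper's (in particular about why the localization in Step~1 is valid for $x\in Q$), and you observe that no implicit constant is needed.
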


\begin{proof} Let $w=uv$ with $u\in A_1$ and $v\in RH_\infty$. Then
\begin{eqnarray*}
[w]_{A_\infty} &\lesssim& \sup_Q \frac {\sup_{x\in Q} v }{w(Q)}  \int_Q Mu(x) dx \le [u]_{A_1}\sup_Q \frac {\sup_{x\in Q} v}{w(Q)} u(Q)
\\
&\le&  [u]_{A_1} [v]_{RH_\infty} \sup_Q \frac {v(Q) }{w(Q)} \frac{u(Q)}{|Q|}\le [u]_{A_1}^2 [v]_{RH_\infty}\le [u]_{A_1}^2 [v]_{RH_\infty}^2,
\end{eqnarray*}
and the result follows taking the infimum over all possible decomposition of $w=uv$. 
 \end{proof}

Now, given a weight $u_0\in A_1$, let us consider the operator
$$
L_{u_0}(x)=\frac {M(u_0 f)(x)}{u_0(x)} , \qquad a.e. x. 
$$
 
We will need the following proposition obtained in \cite{CU-M-P}.
\begin{proposition}  \label{qo} \cite{CU-M-P} Let $s\ge 1$.  Given $u_0\in A_1$ and $\nu$ such that  $\nu^{s}\in A_\infty$, 
 there exists $p_0>1$ and $K_0>0$ such that, for every $p\ge p_0$  
\begin{equation}\label{K0}
L_{u_0}: L^{p,1}(u_0\nu^s) \longrightarrow L^{p,1}(u_0\nu^s), \qquad ||L_{u_0}||\le K_0.
\end{equation}

Moreover, if  
$$
R_{u_0} h=\sum_{k=0}^\infty \frac{L_{u_0}^k h}{2^k K_0^k}, 
$$
 the following conditions holds:
\begin{enumerate}
\item
$h\le R_{u_0} h$.

\item $u_0(R_{u_0} h)\in A_1$ with $[u_{0}(R_{u_0} h)]_{A_1} \le 2K_0$.

\item 
$$
R_{u_0}: L^{p, 1} (u_0\nu^s) \longrightarrow L^{p,  1} (u_0\nu^s), \qquad [R_{u_0}] \le 2.
$$
 \end{enumerate}
\end{proposition}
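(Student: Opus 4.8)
The plan is to establish the operator bound \eqref{K0} first --- this is the analytic heart of the statement --- and then read off properties (1)--(3) from the formal structure of the Rubio de Francia algorithm.

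\emph{Reduction of \eqref{K0}.} For any fixed exponent $q>1$, the change of variable $g=u_0f$ turns the desired inequality into a weighted bound for the Hardy--Littlewood maximal operator: since
\[
\int_{\mathbb{R}^n}\big(L_{u_0}f\big)^q u_0\nu^s\,dx=\int_{\mathbb{R}^n}M(u_0f)^q\,u_0^{1-q}\nu^s\,dx=\int_{\mathbb{R}^n}(Mg)^q\,u_0^{1-q}\nu^s\,dx
\]
and $\int_{\mathbb{R}^n}|f|^q u_0\nu^s\,dx=\int_{\mathbb{R}^n}|g|^q u_0^{1-q}\nu^s\,dx$, one gets $\|L_{u_0}\|_{L^q(u_0\nu^s)\to L^q(u_0\nu^s)}=\|M\|_{L^q(w_q)\to L^q(w_q)}$ with $w_q:=u_0^{1-q}\nu^s$. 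The catch --- the reason the statement is not trivial --- is that the weight $w_q$ genuinely varies with $q$, so there is no single exponent to which everything reduces.

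\emph{A uniform $A_q$ estimate for $w_q$.} Since $u_0\in A_1$ it satisfies a reverse H\"older inequality $\ave{u_0^{1+\eta}}_Q^{1/(1+\eta)}\le C_{RH}\,\ave{u_0}_Q$ for some $\eta>0$, and since $\nu^s\in A_\infty$ there is $r_0>1$ with $\nu^s\in A_{r_0}$. Writing the $A_q$ characteristic as $\sup_Q\ave{w_q}_Q\,\ave{w_q^{1-q'}}_Q^{q-1}$ and noting $w_q^{1-q'}=u_0\,\nu^{-s/(q-1)}$, I would bound $\ave{w_q}_Q\le\big([u_0]_{A_1}/\ave{u_0}_Q\big)^{q-1}\ave{\nu^s}_Q$ (using $\inf_Q u_0\ge\ave{u_0}_Q/[u_0]_{A_1}$) and, writing $\nu^{-s/(q-1)}=(\nu^{-s/(r_0-1)})^{(r_0-1)/(q-1)}$, bound $\ave{u_0\,\nu^{-s/(q-1)}}_Q$ by H\"older with exponents $\tfrac{q-1}{r_0-1}$ and its conjugate followed by the reverse H\"older inequality for $u_0$ --- this last step forces $q\ge p_0$ for a suitable $p_0=p_0(\eta,r_0)\ge 2$ (so the exponent on $u_0$ stays below $1+\eta$). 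After multiplying, the $u_0$-averages cancel and the $\nu$-averages assemble exactly into $[\nu^s]_{A_{r_0}}$, giving
\[
[w_q]_{A_q}\le\big([u_0]_{A_1}C_{RH}\big)^{q-1}[\nu^s]_{A_{r_0}},\qquad\text{so}\qquad [w_q]_{A_q}^{1/(q-1)}\le C_1:=[u_0]_{A_1}C_{RH}[\nu^s]_{A_{r_0}}
\]
for all $q\ge p_0$, with $C_1$ depending only on $u_0$ and $\nu^s$.

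\emph{From $L^q$ to $L^{p,1}$, and the three properties.} By Buckley's sharp bound $\|M\|_{L^q(w)\to L^q(w)}\le c_n q'\,[w]_{A_q}^{1/(q-1)}$, the previous step gives $\|M\|_{L^q(w_q)\to L^q(w_q)}\le 2c_nC_1=:K_0'$ for $q\ge p_0$, hence $\|L_{u_0}\|_{L^q(u_0\nu^s)\to L^q(u_0\nu^s)}\le K_0'$ for all $q\ge p_0$. Fixing $p\ge 2p_0$ and interpolating (Marcinkiewicz, Lorentz form) the $L^{p/2}(u_0\nu^s)$- and $L^{2p}(u_0\nu^s)$-bounds of the sublinear operator $L_{u_0}$ --- using that $L^{p,1}(u_0\nu^s)$ is the real interpolation space $\big(L^{p/2}(u_0\nu^s),L^{2p}(u_0\nu^s)\big)_{2/3,1}$, whose interpolation parameter is $p$-independent because the two endpoints scale with $p$ --- yields $\|L_{u_0}\|_{L^{p,1}(u_0\nu^s)\to L^{p,1}(u_0\nu^s)}\le C\,K_0'=:K_0$, uniformly in $p\ge 2p_0$; renaming $2p_0$ as $p_0$ gives \eqref{K0}. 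Property (1) is then immediate since $L_{u_0}^0h=h$ and the remaining summands of $R_{u_0}h$ are nonnegative. Property (3) follows from the triangle inequality in $L^{p,1}(u_0\nu^s)$ together with \eqref{K0}: $\|R_{u_0}h\|_{L^{p,1}(u_0\nu^s)}\le\sum_{k\ge0}\frac{\|L_{u_0}\|^k}{2^kK_0^k}\|h\|_{L^{p,1}(u_0\nu^s)}\le\sum_{k\ge0}2^{-k}\|h\|_{L^{p,1}(u_0\nu^s)}=2\|h\|_{L^{p,1}(u_0\nu^s)}$ (in particular $R_{u_0}h<\infty$ a.e.). For property (2), the identity $u_0L_{u_0}^kh=M^k(u_0h)$ gives $u_0R_{u_0}h=\sum_{k\ge0}\frac{M^k(u_0h)}{2^kK_0^k}=:Rg$ with $g=u_0h$, and subadditivity of $M$ yields $M(Rg)\le\sum_{k\ge0}\frac{M^{k+1}g}{2^kK_0^k}=2K_0\sum_{k\ge1}\frac{M^kg}{2^kK_0^k}\le 2K_0\,Rg$, i.e. $[u_0R_{u_0}h]_{A_1}\le 2K_0$.

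The main obstacle is exactly the uniformity in $p$: one must bound $L_{u_0}$ on $L^{p,1}$ by a single constant $K_0$ valid for all large $p$, while the weight $w_q=u_0^{1-q}\nu^s$ for which $M$ must be controlled depends on $q$. The three things that make this work are the exact cancellation of the $u_0$-averages in the $A_q$ computation (which keeps $[w_q]_{A_q}^{1/(q-1)}$ bounded), the $[w]_{A_q}^{1/(q-1)}$-type dependence in Buckley's theorem, and the device of interpolating between $L^{p/2}$ and $L^{2p}$ so that the interpolation parameter does not degenerate as $p\to\infty$; everything else is standard Rubio de Francia machinery.
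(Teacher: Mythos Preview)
Your argument is essentially correct. Note, however, that the paper does not prove this proposition at all --- it is quoted from \cite{CU-M-P} (with further details referenced in the appendix of \cite{LiOPi}) --- so there is no ``paper's own proof'' to match. What one can infer about the original argument comes from the proof of Proposition~\ref{later} here: the constant is recorded as $K_0=4p_0(C_0+C_1)$ with $C_1=[u_0]_{A_1}$ and $C_0$ controlled by the single norm $\|M\|_{L^{p_0}(u_0^{1-p_0}\nu^s)}$. This indicates that \cite{CU-M-P} fix one exponent $p_0$, combine the bound for $M$ on $L^{p_0}(u_0^{1-p_0}\nu^s)$ (i.e.\ your reduction at the single value $q=p_0$) with the trivial $L^\infty$ estimate $\|L_{u_0}f\|_\infty\le[u_0]_{A_1}\|f\|_\infty$ (immediate from $Mu_0\le[u_0]_{A_1}u_0$), and pass to $L^{p,1}$ for all $p\ge p_0$ from those two fixed endpoints. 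Your route is genuinely different: you establish the stronger uniform estimate $[u_0^{1-q}\nu^s]_{A_q}^{1/(q-1)}\lesssim 1$ for \emph{every} $q\ge p_0$ via reverse H\"older for $u_0$ and the $A_{r_0}$ condition on $\nu^s$, invoke Buckley at each $q$, and then interpolate between the moving pair $L^{p/2}$, $L^{2p}$. Both approaches are valid; the \cite{CU-M-P} version is more economical (one $A_{p_0}$ verification rather than a family, and the $L^\infty$ endpoint comes for free), whereas yours makes the self-improvement of the whole weight family $w_q=u_0^{1-q}\nu^s$ explicit and delivers the strong-type $L^q$ bounds for all large $q$ as a byproduct. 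One point in your write-up that deserves an extra line is the uniformity in $p$ of the norm-equivalence constants in the identification $(L^{p/2},L^{2p})_{2/3,1}=L^{p,1}$; this does hold because those constants depend only on $\theta=2/3$ and on the fixed ratios $p/(p/2)=2$ and $(2p)/p=2$, but it is precisely the place where a careless interpolation could lose the uniformity you need.
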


For later purposes, we need to clarify a quantitative estimate of the above statement for particular choices of $\nu$, in order to apply Rubio de Francia's theorem in our application.

\begin{proposition} \label{later} In Proposition \ref{qo}, if we assume that $\nu=\prod_{j=1}^N(Mh_j)^{-\alpha_j}$ for some $\alpha_j>0$, then we can choose $p_0\sim  [u_0]_{A_1}$ and  $K_0\sim  [u_0]^2_{A_1}$.

\end{proposition}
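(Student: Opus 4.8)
The plan is to revisit the proof of Proposition \ref{qo} and track the dependence of $p_0$ and $K_0$ on the weight $u_0$, using the specific structure $\nu=\prod_{j=1}^N(Mh_j)^{-\alpha_j}$. The key point is that $u_0\nu^s$ is itself an $A_\infty$ weight whose relevant quantitative constant is governed by $[u_0]_{A_1}$ alone: indeed, as recorded in the preliminaries, $w=u_0\prod_j(Mh_j)^{-s\alpha_j}$ satisfies $[w]_{RH_\infty^1}\lesssim [u_0]_{A_1}$, and then by Lemma \ref{infinito2} we get $[u_0\nu^s]_{A_\infty}\lesssim [u_0]_{A_1}^2$. So every $A_\infty$-type constant appearing in the argument for \eqref{K0} can be replaced by a power of $[u_0]_{A_1}$.

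Next I would recall \emph{why} $L_{u_0}$ is bounded on $L^{p,1}(u_0\nu^s)$ for $p$ large. The operator $L_{u_0}h = M(u_0h)/u_0$ is bounded on $L^q(u_0)$ for $q$ in a range determined by $u_0\in A_1$ (essentially all $q>1$, with norm controlled by $[u_0]_{A_1}$ via the weighted boundedness of $M$ together with the $A_1$ condition $Mu_0\lesssim u_0$); more to the point it maps $L^{q,1}(u_0\nu^s)\to L^{q,\infty}(u_0\nu^s)$ once $u_0\nu^s$ lies in the restricted class $A_q^{\mathcal R}$, and then a Marcinkiewicz-type interpolation between two such estimates upgrades the target to $L^{q,1}$. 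The threshold $p_0$ is precisely the place where $u_0\nu^s$ enters $A_{p_0}^{\mathcal R}$ (equivalently, where the reverse-Hölder / openness exponent of $u_0\nu^s$ kicks in); since $[u_0\nu^s]_{A_\infty}\lesssim [u_0]_{A_1}^2$, the sharp reverse-Hölder theorem gives an exponent $1+c/[u_0\nu^s]_{A_\infty}\gtrsim 1+c/[u_0]_{A_1}^2$, so one may take $p_0\sim [u_0]_{A_1}$ (any fixed power would do for our application, but linear is the clean statement), and on that range $\|L_{u_0}\|\lesssim [u_0]_{A_1}^2 =: K_0$, again because all the constants feeding into the interpolation are powers of $[u_0]_{A_1}$ and $[u_0\nu^s]_{A_\infty}$.

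Finally, the three conclusions about $R_{u_0}=\sum_k L_{u_0}^k/(2^kK_0^k)$ are formal consequences once $p_0$ and $K_0$ are pinned down: (1) $h\le R_{u_0}h$ is immediate from the $k=0$ term and positivity; (3) $\|R_{u_0}\|\le \sum_k \|L_{u_0}\|^k/(2^kK_0^k)\le \sum_k 2^{-k}=2$ by the choice $K_0\ge \|L_{u_0}\|$; and (2) $[u_0(R_{u_0}h)]_{A_1}\le 2K_0$ follows from the sub-mean-value property $M(u_0 R_{u_0}h)\le 2K_0\, u_0 R_{u_0}h$, which one checks exactly as in \cite{CU-M-P} using $M(u_0 L_{u_0}^kh)=M(M(u_0 L_{u_0}^{k-1}h))\le K_0 u_0 L_{u_0}^k h \cdot(\text{shift of index})$ — the algebraic Rubio de Francia iteration — with the constant now being the explicit $K_0\sim[u_0]_{A_1}^2$. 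I expect the main obstacle to be the second paragraph: carefully extracting the quantitative dependence of the $L^{p,1}\to L^{p,1}$ norm of $L_{u_0}$ from the restricted-weak-type machinery, and in particular checking that the two interpolation endpoints can be chosen with both the $A_q^{\mathcal R}$ constants and the interpolation gap controlled by $[u_0]_{A_1}$; everything else is bookkeeping or a direct quotation of Lemma \ref{infinito2} and the $RH_\infty^1$ estimate for products of powers of maximal functions.
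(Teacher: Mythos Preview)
There is a genuine gap in the second paragraph, where you misidentify the weight condition governing $L_{u_0}$. Writing $g=u_0h$, one has
\[
\|L_{u_0}h\|_{L^{q,1}(u_0\nu^s)}=\|Mg\|_{L^{q,1}(u_0^{\,1-q}\nu^s)},\qquad
\|h\|_{L^{q,1}(u_0\nu^s)}=\|g\|_{L^{q,1}(u_0^{\,1-q}\nu^s)},
\]
so the boundedness of $L_{u_0}$ on $L^{q,1}(u_0\nu^s)$ is equivalent to that of $M$ on $L^{q,1}(u_0^{\,1-q}\nu^s)$, i.e.\ to $u_0^{\,1-q}\nu^s\in A_q^{\mathcal R}$, \emph{not} to $u_0\nu^s\in A_q^{\mathcal R}$. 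Consequently, your estimate $[u_0\nu^s]_{A_\infty}\lesssim [u_0]_{A_1}^2$ (which is correct) is aimed at the wrong weight, and the sharp reverse H\"older argument you sketch does not produce the needed threshold. Note also that $u_0\nu^s=u_0\prod_j(Mh_j)^{-s\alpha_j}$ already lies in $\widehat A_{1+s\sum_j\alpha_j}$, so ``the place where $u_0\nu^s$ enters $A_{p_0}^{\mathcal R}$'' depends on the exponents $\alpha_j$, not on $[u_0]_{A_1}$.

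The paper instead quotes the explicit formula $K_0=4p_0(C_0+C_1)$ from \cite{CU-M-P,LiOPi}, with $C_1=[u_0]_{A_1}$ and $C_0\lesssim\|M\|_{L^{p_0}(u_0^{\,1-p_0}\nu^s)}$, and then controls the latter via Buckley by showing $[u_0^{\,1-p_0}\nu^s]_{A_{p_0}}^{1/(p_0-1)}\lesssim[u_0]_{A_1}$. The point you are missing is the duality rewriting
\[
u_0^{\,1-p_0}\nu^s=\Big(u_0\prod_j(Mh_j)^{\alpha_js/(p_0-1)}\Big)^{1-p_0}=:u_1^{\,1-p_0},
\]
in which the exponents on the $Mh_j$ become \emph{positive}. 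One then uses the $A_1$ self-improvement $u_0^{1+\varepsilon}\in A_1$ with $\varepsilon\sim 1/[u_0]_{A_1}$ to absorb the factor $\prod_j(Mh_j)^{\alpha_js/(p_0-1)}$ and conclude $[u_1]_{A_1}\lesssim[u_0]_{A_1}$; the matching of exponents forces $p_0-1\sim (1/\varepsilon)\,s\sum_j\alpha_j\sim[u_0]_{A_1}$, whence $K_0\sim p_0\cdot[u_0]_{A_1}\sim[u_0]_{A_1}^2$. Your $RH_\infty^1$/$A_\infty$ route for $u_0\nu^s$ does not see this dual structure and cannot, as stated, deliver the bound on $u_0^{\,1-p_0}\nu^s$.
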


\begin{proof} In the proof of Theorem 1.7 in \cite{CU-M-P} (or with some more details in the appendix of \cite{LiOPi}) an explicit expression for $K_0$ was obtained as follows 
$$
K_0=4 p_0 (C_0+C_1),
$$
   where $C_1=[u_0]_{A_1}$  and $C_0$ can be controlled by the norm $\norm{M}_{L^{p_0}(u_0^{1-p_0} \nu^s)}$. However,   by Buckey's Theorem (\cite{Buck}), the previous norm is controlled by $[u_0^{1-p_0} \nu^s]^{1/(p_0-1)}_{A_{p_0}}$.

 Now, in order to estimate this quantity, we write
 $$
 u_0^{1-p_0} \nu^s= \left[ u_0 \prod_{j=1}^N(Mh_j)^{\frac{\alpha_js}{p_0-1}}\right]^{1-p_0}. 
 $$
Let us see now that we can choose $p_0$ such that $u_1:= u_0 \prod_{j=1}^N(Mh_j)^{\frac{\alpha_js}{p_0-1}}\in A_1$   with $[u_1]_{A_1}\lesssim [u_0]_{A_1}$. Let $\varepsilon=\frac 1{2^{n+1}[u_0]_{A_1}}$. Then, it is known (see for instance Lemma 3.1 in \cite{LeOP})  that $u_2:=u_0^{1+\varepsilon} \in A_1$ with $[u_2]_{A_1}\lesssim [u_0]_{A_1}$.

Let us choose $p_0$ so that if $\beta_{j}=\frac{1+\varepsilon} {\varepsilon}\frac{\alpha_js}{p_0-1}$ then $\sum_{j=1}^{N}\beta_j=\frac{1}{2}$. Then

$$
u_1= u_2^{\frac 1{1+\varepsilon}} \Big(\prod_j (Mh_j)^{\beta_j}\Big)^{\frac{\varepsilon}{1+\varepsilon}}, 
$$ 
  and, consequently, 
  $$
  [u_1]_{A_1} \lesssim [u_2]_{A_1}\lesssim [u_0]_{A_1},
  $$
  as we wanted to see.  From here, it follows that
  $$
 [u_0^{1-p_0} \nu^s]^{1/(p_0-1)}_{A_{p_0}} \lesssim  [u_1]_{A_1}   \lesssim  [u_0]_{A_1}. 
   $$
  Since, clearly, $p_0\sim  [u_0]_{A_1}$,    the result follows.

\end{proof}

\section{Proofs of Theorems}

\begin{proof}[Proof of Theorem \ref{extra}] Using Kolmogorov inequalities, we know that, for every $r<q$, 
\begin{equation}\label{kolmo}
\left|\left| \frac {f_1} v\right|\right|_{L^{q, \infty}(uv^q)}\approx \left( \sup_E \frac 1{uv^q(E)^{1-\frac rq}} \int_E f_1^r u v^{q-r} \right)^{1/r}. 
\end{equation}

Since $u\in A_{\infty}$, there exists $u_0\in A_1$ and $v_0\in RH_\infty$ so that $u=u_0 v_0$.   Now, in the following estimate we will apply Proposition \ref{qo} with $u_0$, $s=q$ and $\nu= v_0^{1/q} v$. And therefore $u_{0}\nu^{q}= u v^{q}$. In fact, we have

\begin{eqnarray*}
& &\int_E f_1^r u v^{q-r}= \int f_1^r u_0 v_0 v^{q-r} \chi_E 
\le \int  f_1^r u_0 R_{u_0}(\chi_E) v_0 v^{q-r} 
\\
&\le& \varphi([u_0 R_{u_0}(\chi_E) v_0v^{q-r}]_{A_\infty})  \int  f_2^r u_0 R_{u_0}(\chi_E) v_0 v^{q-r},
\end{eqnarray*}

Observe that $u_0 R_{u_0}(\chi_E)\in A_1$. So, taking into account that $v^{q}\in A_{\infty}$, we can choose $r$ very close to $q$ such that  $u_0 R_{u_0}(\chi_E) v^{q-r}\in A_{\infty}$. And since $v_0\in RH_{\infty}$ (which is invariant in $A_{\infty}$, see for instance Lemma 2.1 in \cite{LiOPi}), we have that  $u_0 R_{u_0}(\chi_E) v_0 v^{q-r}\in A_{\infty}$. Now, let

\begin{equation}\label{buv}
C(u, v)=\varphi([u_0 R_{u_0}(\chi_E) v_0 v^{q-r}]_{A_\infty}).
\end{equation}
Using Proposition \ref{qo}, we have that, if $(q/r)' \ge p_0$, 
\begin{eqnarray*}
\int _E f_1^r u v^{q-r}&\lesssim&  C(u, v)\int  \left(\frac {f_2}v\right)^r   R_{u_0}(\chi_E) u v^{q}
\\
&\le& C(u, v) \left|\left| \left(\frac {f_2}v\right)^r\right|\right|_{L^{\frac {q}r, \infty}(uv^q)} ||R_{u_0}(\chi_E)||_{L^{(\frac qr)', 1}(uv^q)}
\\
&\lesssim& C(u, v) \left|\left| \frac {f_2} v\right|\right|_{L^{q, \infty}(uv^q)}^r uv^q(E)^{1/(\frac qr)'},
\end{eqnarray*}
and the first part of the theorem follows using \eqref{kolmo}.
 
Now, we will check the second part of the theorem. In fact, since $v=(Mg)^{-\alpha}$ and $u= u_0 (Mh_1)^{\beta (1-q)}(Mh_2)^{(1-\beta)(1-q)}$,  using Lemma \ref{infinito2} and Propositions \ref{qo} and \ref{later},  we have that 
\begin{eqnarray*}
& & [u_0 R_{u_0}(\chi_E) (Mh_1)^{\beta (1-q)}(Mh_2)^{(1-\beta)(1-q)} v^{q-r}]_{A_\infty}
\\
&\lesssim&  [u_0 R_{u_0}(\chi_E)]^2_{A_{1}}\lesssim K^2_0 \sim [u_0]^{4}_{A_1}.
\end{eqnarray*}

Therefore, we can choose $C(u,v)\sim \phi([u]_{\widehat A_{q,2}})\sim \varphi([u]^{4q}_{\widehat A_{q,2}})$.

\end{proof}

\begin{proof}[Proof of Theorem \ref{Saw}]  By Theorem \ref{extra} and \eqref{mvmv}, we have that 
\begin{eqnarray*}
& &||Sf v^{-1}||_{L^{q, \infty}(uv^q)}\le C(u,v) ||Mf v^{-1}||_{L^{q, \infty}(uv^q)}
\\
&\lesssim&   C(u,v) C_M(u,v) ||f||_{L^{q,1}(u)}:= C_S(u,v) ||f||_{L^{q,1}(u)}. 
\end{eqnarray*}
Now, if $v=(Mh)^{-\alpha}$, we have by \eqref{buv} and \eqref{vava} that there exist two increasing functions $\phi$ and $\Psi$ so that 
$$
C_S(u,v)\lesssim \phi([u]_{\widehat A_{q,2}}) \Psi( [u]_{A_q^{\mathcal R}}, [uv^q]_{A_r^{\mathcal R}}), 
$$
where $r$ is such that   $uv^q \in A_r^{\mathcal R}$. Now, $[u]_{A_q^{\mathcal R}} \lesssim [u]_{\widehat A_{q,2}}$ and  since $uv^q= u_0 \prod_{j=1}^3(Mh_j)^{-\alpha_j}$ for some $h_j\in L^1_{loc}$, $u_0\in A_1$ and $\alpha_j\ge 0$,  we also have that 
\begin{equation}\label{fff}
[uv^q]_{A_r^{\mathcal R}}\lesssim [u_0]_{A_1}^\beta,
\end{equation}
 for some $\beta$ depending on $(\alpha_j)_j$.  Finally, since $u_0$ can be taken so that $[u_0]_{A_1}\le 2 [u]_{\widehat A_{q,2}}^q$, we can conclude that 
$$
C_S(u, v) \lesssim \varphi( [u]_{\widehat A_{q,2}}),
$$
with $\varphi$ increasing and the result follows. 
\end{proof}

\begin{proof}[Proof of Theorem \ref{General}] 

Let $S_i=S_{T_i}$. We will argue directly for $(S_1f_1)(S_2f_2)$ since is a bigger operator than $|(T_1f_1)(T_2f_2)|$. Observe that if $f_i$ are non null bounded and compact support functions it is easy to check that $0<S(f_i)(x)<\infty$ a.e. $x$.  Moreover, it is well know that 
$$
v_{i}:=(S_i(f_i))^{-1}\in A_{\infty}\quad\mbox{and}\quad v_i^\alpha \in A_\infty, \ \forall \alpha>0, 
$$ ($i=1,2$) with {  universal} constants, in the second case only depending on $\alpha$. Then,    following  the strategy in \cite{loptt:loptt, LiOPi, pr:pr}, 
it is enough to prove that
\begin{eqnarray*} \label{eq:eno}
A&:=&\sup_{\lambda>0} \lambda^{p}  (w_{1}^{p/p_1}w_{2}^{p/p_2})\left( \{ \lambda<(S_1f_1(x))(S_2f_2(x))\le 2\lambda \}\right) 
\\
&\lesssim& \left(\|f_1\|_{L^{p_1, 1}(w_1)}\|f_2\|_{L^{p_2, 1}(w_2)}\right)^{p}. 
\end{eqnarray*}
Now, fixed $\lambda>0$,   we observe that 

\begin{eqnarray*}
& &w_{1}^{p/p_1}w_{2}^{p/p_2}\left(\{x\in \mathbb{R}^{n}: \lambda<S_1f_1(x)S_2f_2(x)\le 2\lambda \}\right)
\\
&\lesssim& \lambda^p \int_{\{x\in \mathbb{R}^{n}: \lambda<S_1f_1(x)S_2f_2(x)\le 2\lambda\}} \left(\frac{w_1}{(S_2f_2)^{p_1}}\right)^{\frac p{p_1}}\left(\frac{w_2}{(S_1f_1)^{p_2}}\right)^{\frac p{p_2}}
\\
&\le& \lambda^p \left(\int_{\left\{x\in \mathbb{R}^{n}: \lambda  <\frac{S_1f_1(x)}{v_2(x)}\right\}} w_{1}v_{2}^{p_1}\right)^{p/p_1}\left(\int_{\left\{x\in \mathbb{R}^{n}: \lambda  <\frac{S_2f_2(x)}{v_1(x)}\right\}} w_{2}v_1^{p_2} \right)^{p/p_2}.
\end{eqnarray*}

From here, it follows that
$$
A\lesssim \left|\left| \frac{S_1f_1}{v_2}\right|\right|_{L^{p_1, \infty}(w_{1}v_{2}^{p_1})}^p\left|\left| \frac{S_2f_2}{v_1}\right|\right|_{L^{p_2, \infty}(w_{2}v_{1}^{p_2})}^p.
$$

Since $T_1$ and $T_2$ satisfy condition (C), we have by Theorem \ref{extra} that 
$$
\left|\left| \frac{S_if_i}{v_j}\right|\right|_{L^{p_i, \infty}(w_{i}v_{j}^{p_i})}\lesssim \phi_i([w_i]_{\widehat A_{p_{i},2}}) \left|\left| \frac{Mf_i}{v_j}\right|\right|_{L^{p_i, \infty}(w_{i}v_{j}^{p_i})}, \qquad i,j\in\{1,2\}, \ i\neq j, 
$$
for some increasing functions $\phi_i$. Then,   using \eqref{mvmv},  \eqref{vava} and \eqref{fff}, we have that 
\begin{eqnarray*}
A&\lesssim&  \prod_{i=1}^2 \phi_i([w_i]_{\widehat A_{p_i, 2}}) \prod_{i=1}^2 \Psi_i ([w_i]_{\widehat A_{p_i, 2}} , [w_i v^{p_i}]_{A_r^{\mathcal R}}) \left(\|f_1\|_{L^{p_1, 1}(w_1)}\|f_2\|_{L^{p_2, 1}(w_2)}\right)^{p}
\\ 
&\lesssim& \varphi( [w_1]_{\widehat A_{p_1, 2}}, [w_2]_{\widehat A_{p_2, 2}}) \left(\|f_1\|_{L^{p_1, 1}(w_1)}\|f_2\|_{L^{p_2, 1}(w_2)}\right)^{p}, 
\end{eqnarray*}
with $\varphi$ an increasing function in each variable. 
\end{proof}

The proof of Theorem \ref{Hilbert} follows immediately from Theorem \ref{General} since $H$ satisfies condition (C).

\section{application to bounded variation fourier multipliers}

\begin{proof}[Proof of Corollary \ref{bv1}]
Let $(q_1, q_2; q)$ with $q>1$ and  let $v_j\in \widehat A_{q_j, 2}$ ($j=0,1$). Since 
$L^{q, \infty}(v_1^{q/q_1} v_2^{q/q_2})$  is a Banach space, we have that

\begin{equation*}
||B_m (f_1, f_2)(x)||_{L^{q, \infty}(v_1^{q/q_1} v_2^{q/q_2})} \le \int_{\mathbb R^2} ||H_{t,s} (f_1, f_2)||_{L^{q, \infty}(v_1^{q/q_1} v_2^{q/q_2})}  \, d\mu(t, s). 
\end{equation*}
Now, as mentioned in the introduction
\begin{eqnarray*}
& &H_{t,s} (f_1, f_2)(x)= H_tf_1(x) H_s f_2(x) 
\\
&\approx&\left[ f_1(x) + i e^{2\pi i xt}H(e^{-2\pi i r\cdot} f_1)(x)\right]\left[ f_2(x) + i e^{2\pi i xs}H(e^{-2\pi i r\cdot} f_2)(x)\right], 
\end{eqnarray*}
and thus, since the identity operator and $H$ satisfying condition (C), we can apply Theorem \ref{General} to obtain that, for some increasing function $\phi$, 
$$
||H_{t,s} (f_1, f_2)||_{L^{q, \infty}(v_1^{q/q_1} v_2^{q/q_2})}\lesssim \phi( [v_1]_{\widehat A_{p_1,2}}, [v_1]_{\widehat A_{p_2,2}} ) ||f_1||_{L^{p_1,1}(v_1)}||f_2||_{L^{p_2,1}(v_2)},
$$
and hence, same inequality holds for $B_m$; that is,
$$
||B_m(f_1, f_2)||_{L^{q, \infty}(v_1^{q/q_1} v_2^{q/q_2})}\lesssim \phi( [v_1]_{\widehat A_{p_1,2}}, [v_2]_{\widehat A_{p_2,2}} ) ||f_1||_{L^{p_1,1}(v_1)}||f_2||_{L^{p_2,1}(v_2)}. 
$$
Using  Theorem \ref{th:restrictedGrafakos}, we obtain the result.

\end{proof}

\begin{proof}[Proof of Corollary \ref{bv2}]  By Corollary \ref{bv1} and Remark \ref{mumu}, we have that, for every measurable sets $E$ and $F$, 
$$
|B_m(\chi_E, \chi_F)||_{L^{1/2,\infty}(v_1^{q/q_1}v_2^{q/q_2})}\lesssim v_1(E) v_2(F). 
$$
From here, using the same argument than in the proof of Corollary 1.4,  in \cite{bcls} (see also \cite{cgs:cgs}) we obtain the result.

\end{proof}

\end{document}